\newtheorem{thm}{Theorem}[section]
\newtheorem{lem}[thm]{Lemma}
\newtheorem{prop}[thm]{Proposition}
\newtheorem{defn}[thm]{Definition}
\theoremstyle{remark}
\newtheorem{rem}[thm]{Remark}
\begin{document}

\newcommand{\FN}{\mathbb{N}}
\newcommand{\FZ}{\mathbb{Z}}  
\newcommand{\FC}{\mathbb{C}}  
\newcommand{\FR}{\mathbb{R}}  
\newcommand{\FQ}{\mathbb{Q}}  

\newcommand{\fa}{\mathfrak{a}}
\renewcommand{\a}{\mathfrak{a}}
\newcommand{\fb}{\mathfrak{b}}
\newcommand{\fc}{\mathfrak{c}}
\newcommand{\fm}{\mathfrak{m}}
\newcommand{\fo}{\mathfrak{o}}
\newcommand{\fp}{\mathfrak{p}}
\newcommand{\fq}{\mathfrak{q}}

\title[Special values of Hecke's L-funtion]{The behavior of Hecke's L-function of real quadratic fields at $s=0$}

\author{Byungheup Jun} 
\author{Jungyun Lee}
\thanks{The work of the first named author was supported by KRF-2007-341-C00006.}

\email{byungheup@gmail.com} \email{lee9311@kias.re.kr}
\address{School of Mathematics,  Korea  Institute for Advanced Study\\
Hoegiro 87, Dongdaemun-gu, Seoul 130-722, Korea}

\date{}
\maketitle

\begin{abstract}
For a family of real quadratic fields
$\{K_n=\FQ(\sqrt{f(n)})\}_{n\in \FN}$,  a  Dirichlet character
$\chi$ modulo $q$ and prescribed ideals $\{\fb_n\subset K_n\}$, we
investigate the linear behaviour of  the special value of partial
Hecke's L-function $L_{K_n}(s,\chi_n:=\chi\circ
N_{K_n},\fb_n)$ at $s=0$. We show that for $n=qk+r$,
$L_{K_n}(0,\chi_n,\fb_n)$ can be written as
$$\frac{1}{12q^2}(A_{\chi}(r)+kB_{\chi}(r)),$$
where $A_{\chi}(r),B_{\chi}(r)\in \FZ[\chi(1),\chi(2),\cdots,
\chi(q)]$ if a certain condition on $\fb_n$ in terms of its
continued fraction is satisfied. Furthermore, we write precisely
$A_{\chi}(r)$ and $B_{\chi}(r)$ using  values of the Bernoulli
polynomials. We describe how the linearity is used in solving
class number one problem for some families and recover the proofs
in some cases. Finally, we list some families of real quadratic
fields with the linearity.
\end{abstract}
\tableofcontents

\section{Introduction}
In this paper, we are mainly concerned with linear behaviour of the special values of Hecke's
$L$-function at $s=0$ for families of real quadratic fields.

Let $\{K_n=\FQ(\sqrt{f(n)})\}_{n\in \FN}$ be a family of real quadratic fields where $f(n)$ is a positive square free integer for each $n$.
For example $f(x)$ can be a polynomial with
 integer coefficients.

For a Dirichlet character $\chi$ modulo $q$, we have a ray class character $\chi_n:=
\chi\circ N_{K_n}$ for each $n$. 
Fixing an ideal $\fb_n$ in $K_n$ for each $n$, one obtains an indexed family of
partial Hecke L-functions $\{L_{K_n}(s,\chi_n,\fb_n)\}$, where the partial Hecke's L-function for 
$(K,\chi,\fb)$ is defined as 
$$
L_K(s,\chi,\fb) :=
\sum_{\begin{subarray}{l}\fa \sim \fb\\ \text{integral}\\ (q,\fa)=1 \end{subarray}}\chi(\fa)N(\fa)^{-s}.
$$
and $\fa\sim\fb$ means that $\fa=\alpha\fb$ for totally positive $\alpha\in K$.

Roughly speaking, if $L_{K_n}(0,\chi_n,\fb_n)$ can be written as linear polynomial in $k$ with coefficients
depending only on $r$ for $n=qk+r$, we say that $L_{K_n}(0,\chi_n,\fb_n)$ is linear.
\begin{defn}[Linearity]\label{linearity_definition}
When the special values of $L_{K_n}(s,\chi_n,\fb_n)$ at $s=0$ is expressed as
$$
L_{K_n}(0,\chi_n,\fb_n)=\frac{1}{12q^2}(A_{\chi}(r)+kB_{\chi}(r))
$$
for $n=qk+r$, $A_{\chi}(r), B_{\chi}(r)\in \FZ[\chi(1),\chi(2),\cdots\chi(q)]$,
we say that $L_{K_n}(0,\chi_n,\fb_n)$ is {\bf linear}.
\end{defn}

The ``linearity'' is originally observed by Bir\'o in his proof of Yokoi's conjecture(\cite{Biro1}). 

\begin{thm}[Yokoi's conjecture solved by Bir\'o]
If the class number of $\FQ(\sqrt{n^2+4})$ is $1$ then $n\leq 17$.
\end{thm}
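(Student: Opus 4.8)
The plan is to deduce Yokoi's conjecture from the linearity phenomenon of Definition~\ref{linearity_definition} by exploiting the very special arithmetic of the family $K_n=\FQ(\sqrt{n^2+4})$. First I would record the structural fact that makes these fields tractable: the fundamental unit is $\epsilon_n=\frac{n+\sqrt{n^2+4}}{2}$, and since $\epsilon_n=n+1/\epsilon_n$ its continued fraction is purely periodic of period one, $\epsilon_n=[\overline{n}]$. Thus the continued-fraction hypothesis needed for linearity is met by $\fb_n=\fo_{K_n}$, so that for a fixed \emph{odd} Dirichlet character $\chi$ modulo $q$ and $n=qk+r$ one may write
$$
L_{K_n}(0,\chi_n,\fo_{K_n})=\frac{1}{12q^2}\bigl(A_\chi(r)+kB_\chi(r)\bigr),
$$
an expression that is affine in $k$ with coefficients determined only by the residue $r$.

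The second ingredient is the hypothesis $h(K_n)=1$. Because $N(\epsilon_n)=-1$, the narrow and wide class numbers of $K_n$ coincide, so $h(K_n)=1$ forces the narrow class number to be one; there is then a single narrow ideal class, and the partial Hecke L-function over the principal class coincides with the complete one. The latter factors as a product of Dirichlet L-functions, $L_{K_n}(s,\chi_n)=L(s,\chi)\,L(s,\chi\chi_{D_n})$, where $\chi_{D_n}$ is the quadratic character attached to $K_n$ and $D_n$ its discriminant. Evaluating at $s=0$ and using $L(0,\psi)=-B_{1,\psi}$ gives $L_{K_n}(0,\chi_n)=B_{1,\chi}\,B_{1,\chi\chi_{D_n}}$, which is nonzero precisely when $\chi$ is odd (so that $\chi\chi_{D_n}$ is odd, $\chi_{D_n}$ being even for a real field). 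Equating the two computations yields, for every $n$ with $h(K_n)=1$,
$$
\frac{1}{12q^2}\bigl(A_\chi(r)+kB_\chi(r)\bigr)=B_{1,\chi}\,B_{1,\chi\chi_{D_n}}.
$$

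The heart of the argument is then to turn this identity into a bound on $n$. For fixed $r$ the left-hand side is an explicit affine function of $k=(n-r)/q$, while the right-hand side is the generalized Bernoulli number $B_{1,\chi\chi_{D_n}}=\frac{1}{qD_n}\sum_{a}\chi(a)\chi_{D_n}(a)\,a$, whose value I would re-express by evaluating $\chi_{D_n}(a)=\left(\frac{D_n}{a}\right)$ through quadratic reciprocity in terms of residues of $n$ modulo small auxiliary moduli. Choosing $\chi$ and $q$ so that the resulting character sum is non-degenerate, the comparison should force a rigid Diophantine relation that can hold for at most finitely many $n$; the remaining finitely many candidates are eliminated by direct computation of class numbers, leaving exactly $n\in\{1,2,3,5,7,13,17\}$ and hence $n\le 17$.

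The main obstacle I anticipate is precisely this last comparison: crude size estimates do not suffice, because by P\'olya--Vinogradov the right-hand side is of size $O(n\log n)$, genuinely comparable to the linear left-hand side. What is needed is an \emph{exact} evaluation of $B_{1,\chi\chi_{D_n}}$ compatible with the affine form---equivalently, a congruence extracted from the identity above---together with a judicious choice of the auxiliary character $\chi$ that renders the two sides incompatible for large $n$. Subsidiary technical points include handling the parity of $n$ (the discriminant equals $n^2+4$ for odd $n$ but acquires a factor of $4$ for even $n$), ensuring $\gcd(q,D_n)=1$ so that the factorization and the formula for $B_{1,\chi\chi_{D_n}}$ are valid, and checking the continued-fraction hypothesis uniformly across residue classes $r$.
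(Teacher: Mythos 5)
The paper does not actually prove this theorem; it quotes it from Bir\'o and only sketches the method in Section 4, so the comparison below is against that sketch. Your setup is essentially the right one (linearity for $\fb_n=\fo_{K_n}$, narrow class number one identifying the partial with the full Hecke $L$-function, and the factorization $L_{K_n}(0,\chi_n)=L(0,\chi)L(0,\chi\chi_{D_n})$), but the step you flag as the ``heart of the argument'' is where the proposal genuinely breaks down, and the fix is not the one you suggest. You propose to evaluate $B_{1,\chi\chi_{D_n}}$ exactly via quadratic reciprocity and force a Diophantine incompatibility; as you yourself observe, both sides grow linearly in $n$, and no choice of $\chi$ makes an archimedean or ``exact evaluation'' comparison work. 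The actual mechanism goes the other way: one never evaluates $B_{1,\chi\chi_{D_n}}$ at all. One picks a prime ideal $I$ of the cyclotomic field $L_\chi$ with $I\mid(\sum_{a=1}^q a\chi(a))$ and $O_{L_\chi}/I=\FZ/p\FZ$; since $\frac{1}{qD_n}\sum_b b\chi(b)\chi_{D_n}(b)$ is an algebraic integer of $L_\chi$, the entire right-hand side is $\equiv 0\pmod I$, whence $A_\chi(r)+kB_\chi(r)\equiv 0\pmod I$ and, when $I\nmid B_\chi(r)$,
$$n\equiv -q\,\frac{A_\chi(r)}{B_\chi(r)}+r \pmod p,$$
i.e.\ the residue of $n$ modulo $p$ is determined by the residue $r$ of $n$ modulo $q$.

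The second missing ingredient is what turns this congruence into a bound. Knowing $n\bmod p$ determines the Kronecker symbol $\bigl(\frac{n^2+4}{p}\bigr)$, i.e.\ whether $p$ splits in $K_n$; and for the Richaud--Degert family $\FQ(\sqrt{n^2+4})$ there is a class-number-one criterion asserting that if $h(K_n)=1$ then no prime below a bound growing with $n$ can split. Hence for $n$ large every auxiliary prime $p$ arising from a pair $(q,p)$ as above must be non-split, and one needs \emph{several} such pairs so that every residue class $r$ (modulo the least common multiple of the $q$'s) is driven to a contradiction --- a single ``judicious $\chi$'' does not suffice. Only then does one reduce to a finite, explicitly checkable list of $n$. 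Without the reduction modulo $I$ dividing $L(0,\chi)$, without the class-number-one criterion, and without the covering argument over residue classes, the proposal cannot be completed as written.
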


In Yokoi's conjecture, we take $K_n = \FQ(\sqrt{n^2+4})$ and
$\fb_n=O_{K_n}$.
In page 88, 89 of \cite{Biro1},
Bir\'o expressed 
the special value of Hecke's $L$-function for $(K_n, \chi_n, O_{K_n})$ at $s=0$ for $n=qk+r$
\begin{equation}\label{equation}
L_{K_n}(0,\chi_n,\fb_n)=\frac{1}{q}(A_{\chi}(r)+kB_{\chi}(r)),
\end{equation}
where
\begin{equation*}\begin{split}
A_{\chi}(r)&=\sum_{0\leq C,D\leq q-1}\chi(D^2-C^2-rCD)\Big\lceil\frac{rC-D}{q}\Big\rceil(C-q),\\
B_{\chi}(r)&=\sum_{0\leq C,D\leq q-1}\chi(D^2-C^2-rCD)C(C-q).
\end{split}\end{equation*}

When $K_n$ is of class number $1$, the unique 
ideal class can be represented by any ideal $\fb_n$.
\textit{A priori} the partial Hecke $L$-function equals the total Hecke $L$-function up to multiplication by $2$(ie. 
$$L_{K_n}(0,\chi_n)= c L_{K_n}(0,\chi_n,O_{K_n})$$
for $c$ the number of narrow ideal classes).

From this identification, one can find  the residue of $n$ by
sufficiently many primes $p$ for which the class number of
$\FQ(\sqrt{n^2+4})$ is one. Moreover, from  the linearity, this
residue depends only on $r$. Consequently, one can tell whether
$p$ inerts or not in $\FQ(\sqrt{n^2+4})$. As we have a bound for a
smaller prime to inert depending on $n$, finally we have enough
conditions to list all $K_n$ of class number $1$.

Later in diverse works of Bir\'o, Byeon, Kim and the second named
author
(\cite{Biro2},\cite{Lee1},\cite{Lee2},\cite{Lee3},\cite{Lee4}),
other families $(K_n,\chi_n,\fb_n)$ that has linearity have been
discovered. Similarly,  developing  Biro's method, one can solve
the associated class number one problems.

In this paper, we give a criterion for $(K_n, \chi_n, \fb_n)$ to
have the linearity of the values $L_{K_n}(0,\chi_n,\fb_n)$ in
terms of the continued fraction expression of $\delta(n)$ where
$\fb_n^{-1} = [1,\delta(n)]:=\FZ+\delta(n)\FZ$. Let $[[a_0,a_1,\ldots,a_n]]$ be the purely periodic minus continued fraction
	$$[a_0,a_1,a_2,\ldots,a_n,a_0,a_1,\ldots],$$
	where
	$$
	[a_0,a_1,a_2,\ldots]:= a_0 + \cfrac{1}{a_1 +\cfrac{1}{a_2 + \cdots}}.
	$$
Our main theorem is as follows:

\begin{thm}[Linearity Criterion]\label{linearity_criterion}
Let $\{K_n=\FQ(\sqrt{f(n)})\}_{n\in \FN}$ be a family of real quadratic fields where $f(n)$ is a positive square free integer for each $n$.
Let $\chi$ be a Dirichlet character modulo $q$ for a positive integer $q$ and $\chi_n$ be a ray class character modulo $q$ defined by $\chi\circ N_{K_n}$.
Suppose $\fb_n$ is an integral ideal relatively prime to $q$ such
that $\fb_n^{-1} = [1,\delta(n)]$. Assume the continued
fraction expansion of  $\delta(n)-1$
$$\delta(n)-1=[[a_0(n),a_1(n),\cdots,a_{s-1}(n)]]$$
is purely periodic and of a fixed length $s$
independent of $n$ and  $a_i(n)=\alpha_i
n+\beta_i$ for some fixed $\alpha_i,\beta_i\in \FZ$.

If $N_{K_n}(\fb_n(C+D\delta(n)))$ modulo $q$  is a function only depending on $C$, $D$ and $r$ for $n=qk+r$, 
then $L_{K_n}(0,\chi_n,\fb_n)$ is linear.
\end{thm}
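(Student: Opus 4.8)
The plan is to reduce the special value to a finite sum governed by the continued fraction of $\delta(n)$, and then read off the dependence on $k$ and $r$ directly from the hypotheses.

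First I would parametrize the ideals in the defining sum. An integral ideal $\fa \sim \fb_n$ has the form $\fa = \alpha \fb_n$ with $\alpha \in \fb_n^{-1} = [1, \delta(n)]$ totally positive, and two such $\alpha$ produce the same ideal exactly when they differ by a totally positive unit. Writing $\alpha = C + D\delta(n)$ with $C, D \in \FZ$, and using that $\chi_n = \chi \circ N_{K_n}$ factors through the norm, the character value becomes $\chi_n(\fa) = \chi\big(N_{K_n}(\fb_n(C + D\delta(n)))\big)$, which is precisely the quantity controlled in the hypothesis (for the Yokoi family of \eqref{equation} it is $\chi(D^2 - C^2 - rCD)$). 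Thus
$$L_{K_n}(s, \chi_n, \fb_n) = N(\fb_n)^{-s}\sum_{\alpha}\chi\big(N_{K_n}(\fb_n(C + D\delta(n)))\big)\,N(\alpha)^{-s},$$
the sum running over a fundamental domain for the action of the totally positive units on the totally positive $\alpha \in \fb_n^{-1}$.

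Next I would decompose this fundamental domain into Shintani cones cut out by the convergents of the continued fraction of $\delta(n) - 1$. Because the expansion is purely periodic of the \emph{fixed} length $s$, one obtains exactly $s$ cones, and on each cone the value at $s = 0$ is computed by Shintani's evaluation in terms of the second Bernoulli polynomial $B_2$. After summing over a full period, the quadratic contributions of $B_2$ attached to successive convergents telescope and cancel -- this is the arithmetic miracle behind the closed formula of Meyer and Zagier, whose untwisted prototype is $\zeta(0) = \tfrac{1}{12}\sum_i (a_i - 3)$ -- leaving an expression that is \emph{affine} in the partial quotients $a_i(n)$, with coefficients built from $\chi\big(N_{K_n}(\fb_n(C + D\delta(n)))\big)$ and from residue data (ceiling- and fractional-part terms of the type $\lceil (rC - D)/q\rceil$). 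This reduction is the step I expect to be the main obstacle: one must carry the character twist through Shintani's formula and verify that, inside each residue class modulo $q$, the degree-two terms in the convergents genuinely cancel, so that no power of $a_i(n)$ beyond the first survives, and that after extracting the factor $\tfrac{1}{12q^2}$ the remaining numerical coefficients are integers.

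Finally I would substitute the hypotheses to extract the linearity. Writing $n = qk + r$ gives $a_i(n) = \alpha_i n + \beta_i = (\alpha_i q)\,k + (\alpha_i r + \beta_i)$, affine in $k$ with leading coefficient divisible by $q$, so the affine-in-$a_i$ expression becomes affine in $k$. All of the $k$-independent data is controlled by $r$ alone: the character arguments $N_{K_n}(\fb_n(C + D\delta(n)))$ reduce modulo $q$ to functions of $C, D, r$ by hypothesis, and the residues $a_i(n) \equiv \alpha_i r + \beta_i \pmod q$ governing the fractional parts likewise depend only on $r$. Collecting the coefficient of $k$ into $B_\chi(r)$ and the constant term into $A_\chi(r)$ yields
$$L_{K_n}(0, \chi_n, \fb_n) = \frac{1}{12 q^2}\big(A_\chi(r) + k B_\chi(r)\big),$$
with $A_\chi(r), B_\chi(r) \in \FZ[\chi(1), \ldots, \chi(q)]$, since every character value that occurs is some $\chi(j)$ with $1 \le j \le q$ (the coprimality $(q,\fa)=1$ keeps the norms away from the zeros of $\chi$) and all surviving numerical coefficients are integers. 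This matches the shape of \eqref{equation} and establishes the claimed linearity.
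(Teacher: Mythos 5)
Your setup (parametrizing $\fa\sim\fb_n$ by totally positive $\alpha=C+D\delta(n)$ modulo units, pushing the character through the norm, and reducing to a Shintani cone sum evaluated via Bernoulli polynomials) matches the paper's Section~2. But the core of the argument --- the claim that after summing over the cones the twisted expression ``telescopes'' down to something affine in the $a_i(n)$ with $r$-controlled coefficients --- is exactly the part you flag as ``the main obstacle,'' and as stated it does not go through. Two concrete problems. First, the Shintani--Zagier decomposition is governed by the \emph{minus} continued fraction $\delta(n)=((b_0(n),\dots,b_{m(n)-1}(n)))$, not by the plus expansion of $\delta(n)-1$: the number of cones is $\lambda\, m(n)$ with $m(n)=\sum a_{2i+1}(n)$ (or $\sum a_i(n)$ for odd $s$), which grows linearly in $n$. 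You do not get ``exactly $s$ cones''; you get a number of cones that itself depends on $k$, and this is precisely why linearity is nontrivial. Second, in the twisted ($q>1$) case the cone contributions are $B_1(x_i)B_1(y_i)+\tfrac{b_i}{2}B_2(x_i)$ where $x_i,y_i$ are the fractional translation coordinates of $\tfrac{C+D\delta(n)}{q}+\fb_n^{-1}$ in the $i$-th cone; these do not cancel by telescoping the way the untwisted Meyer--Zagier formula does, and the $x_i$ depend on $n$ through Yamamoto's recursion $x_{i+1}=\langle b_i x_i + y_i\rangle$.

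What actually makes the sum affine in $k$ in the paper is a different mechanism, which your proposal is missing: within each block where $b_i(n)=2$ (between consecutive indices $S_j(n)$), the sequence $x_i(n)$ is an arithmetic progression mod $\FZ$ with values in $\tfrac1q\FZ$, hence periodic of period $q$; the block has length $a_{2j+1}(n)-1$, so the block sum equals (an integer affine in $k$, namely $\kappa_{2j+1}(n)=\alpha_{2j+1}k+\tau_{2j+1}(r)$) times a fixed period-$q$ sum, plus a remainder sum of length $\gamma_{2j+1}(r)-1$ depending only on $r$. One must also prove (by induction on the blocks) that the anchor values $x_{S_j(n)}(n)$, $x_{S_j(n)\pm1}(n)$ are unchanged as $k$ varies, which is the paper's Proposition on the auxiliary sequence $\nu^i_{CD}(r)$. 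Without these periodicity and invariance statements, the assertion that ``no power of $a_i(n)$ beyond the first survives'' and that the coefficients depend only on $r$ is unsupported, so the proposal as written has a genuine gap at its central step.
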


Furthermore, we give a precise  description of $A_{\chi}(r)$ and
$B_{\chi}(r)$ using  values of the Bernoulli polynomials
(Proposition \ref{AB}). From this description, for $n$ with
$h(K_n)=1$, as in Bir\'o's case, one can compute the residue of
$n$ modulo $p$ depending on the mod-$q$ residue $r$ of $n$. There
are possibly many $(q,p)$ pairs. The more pairs of $(q,p)$ we
have, the more we can restrict possible $n$. There are known many
families of which class number one problem can be solved in this
way. Many of known results can be recovered by ensuring the
linearity from continued fraction expansion and finding enough
$(q,p)$.

There are still other families of real quadratic fields with linearity whose class number one
problems are not yet answered. Morally, once we obtain reasonable class number one criterion, finding sufficiently many $(q,p)$-pairs should solve it.

This paper is composed as follows.
In Section 2, we describe the special value at $s=0$ of the partial Hecke L-function in terms of 
values of the Bernoulli polynomials.
Ssection 3 is devoted to the proof of our main theorem.
In Section 4, Bir\'o's method is sketched as
a prototype to apply the linearity. 
Finally in Section 5, we finish this paper with a possible generalization of the linearity criterion 
to polynomial of higher order.
%


\subsection*{Acknowledgment} 
We would like to thank Prof. Dongho Byeon for helpful comments and
discussions. We also thank the anonymous referee for careful reading and
many invaluable suggestions. The first named author wishes to thank Prof. Bumsig Kim and Prof. Soon-Yi Kang for warm supports and encouragements. 

\section*{Notations and conventions}
Throughout this article, we keep the following general notations and conventions. 
If we find it necessary,  we rewrite the notations in concrete terms at the place where it is used.

\begin{enumerate}
	\item $K$ is a real quadratic field. 
	\item For a real quadratic  field $K$, we fix an embedding $\iota:K \to \FR$. If there is no danger of confusion, we denote $\iota(\alpha)$ by an element $\alpha\in K$. $\alpha'$ denotes the conjugate of $\alpha$ as well as $\iota(\alpha')$. 
	\item For $\alpha\in K$, $N_{K}(\alpha)$ denotes the norm of $\alpha$ over $\FQ$. If there is no danger of confusion, we simply write $N(\alpha)$ to denote $N_K(\alpha)$. 
	For an integral ideal $\frak{a}$ of $K$, $N(\frak{a})$ denotes the norm of $\frak{a}$ defined to be $[\frak{o}_K, \frak{a}]$.
	\item For two linearly independent elements $\alpha, \beta \in K$ as a vector space over $\FQ$,  $[\alpha, \beta]$ denotes the lattice (ie. free abelian group) generated by $\alpha$ and $\beta$. A fractional ideal 
	$\fa$ of $K$ seen as a lattice is denoted by $[\alpha,\beta]$ if $\{\alpha,\beta\}$ is a free basis of $\fa$.  
	\item For a subset $A$ of $K$, we denote $A^+$ the set of totally positive elements in $A$. 
	\item $\chi$ is a fixed Dirichlet character of modulus $q$. 
	\item For a real number $x$, 
	$$
	\left< x\right> := 
		\begin{cases}
			x-[x], & \text{for $x\not\in \FZ$} \\
			1, & \text{for $x\in \FZ$}
		\end{cases}
	$$
	Equivalently, $\left< -\right>$ is the unique composition $\FR \xrightarrow{mod~\FZ} \FR/\FZ \to \FR$ that is identity on $(0,1]$. 
	
	\item For a real $x$, $[x]_1:=x-\left<x\right>$.
	\item For an integer $m$,  $\langle m\rangle_q$ denotes the residue of $m$ in $[1,q]$ by $q$(ie. $m= qk +\langle m\rangle_q $ for  $k\in \FZ$,  $\langle m\rangle_q  \in [1,q]\cap \FZ$.).
	\item $[a_0, a_1, a_2, ....]$ for positive integers $a_i$ denotes the usual continued fraction:
	$$
	[a_0,a_1,a_2,\ldots]:= a_0 + \cfrac{1}{a_1 +\cfrac{1}{a_2 + \cdots}}
	$$
	$[a_0,a_1,\ldots,a_{i-1},\overline{a_i,a_{i+1},\ldots,a_{i+j}}]$ denotes the continued fraction with periodic part 
	$(a_i,a_{i+1},\ldots,a_{i+j})$.
	
	$[[a_0,a_1,\ldots,a_n]]$ is the purely periodic continued fraction $$[a_0,a_1,\ldots,a_n, a_0,a_1,\ldots].$$
	
	\item $(a_0, a_1, a_2, \ldots)$ denotes the minus continued fraction:
	$$
	(a_0,a_1,a_2,\ldots):= a_0 - \cfrac{1}{a_1-\cfrac{1}{a_2 -\cdots}}
	$$
	
	$((a_0,a_1,\ldots,a_n))$ is the purely periodic minus continued fraction:
	$$
	(a_0,a_1,a_2,\ldots,a_n,a_0,a_1,\ldots)$$
	\item For an integer $s$, $\mu(s)=1$(resp. $\frac{1}{2}$) if $s$ is odd(resp. even).
\end{enumerate}

\section{Partial Hecke $L$-function}
Throughout this section, $K$ denotes a real quadratic field and $\fb$ is a fixed integral ideal of $K$
relatively prime to $q$. 

A {\it ray class character} modulo $q$ is a homomorphism 
$$\chi:I_K(q)/P_K(q)\rightarrow \FC^*,$$ 
where $I_K(q)$ is a group of fractional ideals of $K$ which is relatively prime to $q$ and $P_K(q)$ is a subgroup of principal ideals 
$(\alpha)$ for totally positive  $\alpha\equiv1\pmod{q}.$

Throughout this section, $\fb$ is an integral ideal such that 
$\fb^{-1}=[1,\delta]$ for $\delta \in K$ satisfying $0<\delta'<1$ and $\delta>2$.


Define $$F:=\{(C,D)\in \FZ^2 |0\leq C,D\leq q-1, ((C+D\delta)\fb,q)=1\}.$$

Let $E^+$(resp. $E_q^+$) be the set of totally positive units (resp. the set of totally positive
units congruent to $1$ mod $q$) in $K$. 
Then $E^{+}$ acts on the set $F$ by the rule
 $$\epsilon\ast(C+D\delta)=C'+D'\delta $$
where
 $ \epsilon \cdot (C+D\delta) +q\fb^{-1}=C'+D'\delta+q\fb^{-1}\,\,\text{for}\,\, \epsilon\in E^{+}.$

\begin{lem}\label{epsilon-length}
$(C,D)$ in $F$ is fixed by the action of $\epsilon$ if and only if
$\epsilon$ is in $E^+_q$.
\end{lem}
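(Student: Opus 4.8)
The plan is to reduce the fixed-point condition to an ideal-theoretic divisibility and let the coprimality built into the definition of $F$ do the work. Write $\gamma := C + D\delta$, so that $\gamma \in \fb^{-1}$ and $\epsilon\ast(C+D\delta)$ is by definition the reduction of $\epsilon\cdot\gamma$ modulo $q\fb^{-1}$. Because representatives $(C',D')$ are taken in $[0,q-1]^2$ and such representatives of a class in $\fb^{-1}/q\fb^{-1}$ are unique, the point $(C,D)$ is fixed by $\epsilon$ if and only if $\epsilon\gamma \equiv \gamma \pmod{q\fb^{-1}}$, that is, if and only if $(\epsilon-1)\gamma \in q\fb^{-1}$. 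This reformulation is the common pivot for both implications.

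For $(\Leftarrow)$, suppose $\epsilon \in E_q^+$, so $\epsilon$ is a totally positive unit with $\epsilon - 1 \in q\fo_K$. Since $\fb^{-1}$ is an $\fo_K$-module, $(\epsilon-1)\gamma \in q\fo_K \cdot \fb^{-1} = q\fb^{-1}$, which is exactly the pivot condition; hence $(C,D)$ is fixed. This direction uses neither the coprimality nor the particular $(C,D)$.

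For $(\Rightarrow)$, suppose $(C,D)\in F$ is fixed by $\epsilon$, so $(\epsilon-1)\gamma \in q\fb^{-1}$. As $\gamma \in \fb^{-1}$, the product $\gamma\fb$ is an integral ideal, and multiplying the membership through by $\fb$ gives the inclusion of integral ideals $(\epsilon-1)\cdot(\gamma\fb) \subseteq q\fb^{-1}\fb = q\fo_K$, i.e. $(q) \mid (\epsilon-1)(\gamma\fb)$. The definition of $F$ guarantees $((C+D\delta)\fb, q) = 1$, so the integral ideal $\gamma\fb$ is coprime to $(q)$; cancelling it yields $(q) \mid (\epsilon-1)$, that is, $\epsilon - 1 \in q\fo_K$. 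Combined with $\epsilon$ being a totally positive unit, this gives $\epsilon \in E_q^+$.

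The argument is largely bookkeeping with fractional ideals; the one step requiring care is the passage from the element-level membership $(\epsilon-1)\gamma \in q\fb^{-1}$ to the ideal divisibility $(q) \mid (\epsilon-1)(\gamma\fb)$ and the clean cancellation of $\gamma\fb$ via coprimality. I would also note that $\gamma \neq 0$ for every $(C,D)\in F$ (otherwise $\gamma\fb$ would be the zero ideal, which is not coprime to $q$), so no degenerate case intervenes.
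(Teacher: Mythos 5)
Your proof is correct and follows essentially the same route as the paper: both reduce the fixed-point condition to $(\epsilon-1)(C+D\delta)\in q\fb^{-1}$ and then use the coprimality $((C+D\delta)\fb,q)=1$ to conclude $\epsilon\equiv 1\pmod{q}$. You merely spell out the ideal-theoretic cancellation that the paper leaves implicit.
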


\begin{proof}
$(C,D)$ is fixed by $\epsilon \in E^+$
if and only if $(C+D\delta)(\epsilon-1) \in q \fb^{-1}.$
Since $(\fb(C+D\delta),q)=1$, the condition $(C+D\delta)(\epsilon-1) \in q \fb^{-1}$ is equivalent to $\epsilon\equiv 1\pmod{q}.$
\end{proof}


\begin{lem}\label{rel}
Suppose $0\le C,D \le q-1$.
Then the following are equivalent:
\begin{enumerate}
\item $(C,D)$ is in $F$.
\item For every $\alpha \in\frac{C+D\delta}{q}+\fb^{-1}$, the ideal
$q\alpha\fb$ is relatively prime to $q$.
\item For a $\alpha \in \frac{C+D\delta}{q}+\fb^{-1}$, the ideal $q\alpha\fb$ is relatively prime to $q$.
\end{enumerate}
\end{lem}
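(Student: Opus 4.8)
The plan is to reduce all three statements to the single assertion that the integral ideal $(C+D\delta)\fb$ is relatively prime to $q$, which is exactly condition (1) by the definition of $F$. First I record the trivial implication $(2)\Rightarrow(3)$: the coset $\frac{C+D\delta}{q}+\fb^{-1}$ is nonempty, since it contains $\frac{C+D\delta}{q}$ (take the zero element of $\fb^{-1}$), so a property holding for every $\alpha$ in it holds for some $\alpha$. It then suffices to prove the remaining implications $(1)\Rightarrow(2)$ and $(3)\Rightarrow(1)$, and both will follow at once from a uniform comparison between the ideals $q\alpha\fb$ and $(C+D\delta)\fb$.

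The key step is the claim that for every $\alpha\in\frac{C+D\delta}{q}+\fb^{-1}$ one has
$$q\alpha\fb+(q)=(C+D\delta)\fb+(q).$$
To see this, write $\alpha=\frac{C+D\delta}{q}+\beta$ with $\beta\in\fb^{-1}$, so that $q\alpha=(C+D\delta)+q\beta$ lies in $\fb^{-1}$; in particular $q\alpha\fb\subseteq\fb^{-1}\fb=\fo_K$ is integral, as is $(C+D\delta)\fb$. Now for any $b\in\fb$,
$$q\alpha\, b-(C+D\delta)\, b=q\beta b\in q\,\fb^{-1}\fb=(q),$$
since $\beta b\in\fo_K$. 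Hence every element $q\alpha b$ of $q\alpha\fb$ lies in $(C+D\delta)\fb+(q)$, and symmetrically (writing $C+D\delta=q\alpha-q\beta$) every element of $(C+D\delta)\fb$ lies in $q\alpha\fb+(q)$, which gives the claimed equality of ideals.

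Granting the claim, the conclusion is immediate. An integral ideal $\fc$ is relatively prime to $q$ precisely when $\fc+(q)=\fo_K$, so the displayed equality shows that $q\alpha\fb$ is relatively prime to $q$ if and only if $(C+D\delta)\fb$ is, the latter being condition (1). This equivalence holds for every $\alpha$ in the coset, which yields $(1)\Rightarrow(2)$, while applying it to a single witnessing $\alpha$ yields $(3)\Rightarrow(1)$; together with $(2)\Rightarrow(3)$ the cycle is closed. I expect the only real subtlety to be the congruence $q\alpha b\equiv(C+D\delta)b\pmod{(q)}$, which rests entirely on the identity $\fb^{-1}\fb=\fo_K$ making $\beta b$ integral; notably, none of the extra hypotheses on $\delta$, such as $0<\delta'<1$ or $\delta>2$, are needed for this lemma.
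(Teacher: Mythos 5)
Your proof is correct. The one point worth checking carefully is the claim $q\alpha\fb+(q)=(C+D\delta)\fb+(q)$, and your verification is sound: writing $q\alpha=(C+D\delta)+q\beta$ with $\beta\in\fb^{-1}$, the difference $q\alpha b-(C+D\delta)b=q\beta b$ lies in $q\,\fb^{-1}\fb=(q)$ for every $b\in\fb$, and the containment is symmetric. You are also right that the hypotheses $\delta>2$, $0<\delta'<1$ play no role here.

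Your route differs from the paper's in a small but real way. The paper proves $(1)\Rightarrow(2)$ multiplicatively: it factors $q\fb\alpha=\fb(C+D\delta)\cdot\frac{q\alpha}{C+D\delta}$ and shows the ratio $\frac{q\alpha}{C+D\delta}$ lies in $1+\frac{q}{C+D\delta}\fb^{-1}$, hence is $\equiv 1\pmod q$ once $(q,(C+D\delta)\fb)=1$; only the converse direction uses the additive containment $q\fb\alpha\subset(C+D\delta)\fb+q\fo_K$, which is exactly one half of your equality. Your argument is more symmetric and more elementary — a single additive identity handles both directions at once, and you never need to divide by $C+D\delta$ or invoke coprimality to make sense of a congruence. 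What the paper's multiplicative step buys is strictly more than the lemma: it shows that $q\fb\alpha$ and $(C+D\delta)\fb$ differ by a totally positive element congruent to $1$ mod $q$ (for $\alpha$ totally positive), i.e.\ that they lie in the same ray class, and this stronger fact is what is actually used later in the proof of Proposition \ref{index} to conclude $\chi(q\fb\alpha)=\chi((C+D\delta)\fb)$. So your proof is a clean and complete proof of the lemma as stated, but it does not subsume the auxiliary multiplicative observation the authors extract in passing.
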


\begin{proof}
Suppose that $(q,(C+D\delta)\fb)=1$.

We have $\frac{q\alpha}{C+D\delta}\in 1+\frac{q}{C+D\delta}\fb^{-1}$ for
$\alpha\in\frac{C+D\delta}{q}+\fb^{-1}$. Thus $(q,\fb(C+D\delta))=1$ implies that
$$\frac{q\alpha}{C+D\delta}\equiv1\pmod{q}.$$
Since $$q\fb\alpha=\fb(C+D\delta)\frac{q\alpha}{C+D\delta},$$
we have
$$(q\fb\alpha,q)=1.$$

If $(q,(C+D\delta)\fb)\not=1$, then $(q,q\fb\alpha)\not=1$
for $\alpha\in \frac{C+D\delta}{q}+\fb^{-1},$
since for $\alpha\in \frac{C+D\delta}{q}+\fb^{-1},$ we have
$$q\fb\alpha\subset(C+D\delta)\fb+qO_K.$$
\end{proof}


Let $F'=F/E^+$ be the orbit space by the action of $E^+$ on $F$.
Let $\tilde{F'}$ a fundamental set of $F'$. 
Let $\epsilon$
be the totally positive fundamental unit.
The order of the action of
$\epsilon$
is $\lambda:=[E^+:E_q^+]$ by Lemma \ref{epsilon-length}.
Then  we can decompose $F$ as follows:
\begin{equation}
F=\bigsqcup_{i=0}^{\lambda-1} \epsilon^i \tilde{F'}.
\end{equation}
According to this decomposition of $F$, we can decompose further the partial Hecke's $L$-function:
\begin{prop} \label{index} Let $q$ be a positive integer. 
For an ideal $\fb\subset K$ relatively prime to $q$ and a ray class character $\chi$ modulo $q$, we have
\begin{equation*}
\begin{split}
&L_K(s,\chi,\fb) =
\sum_{\begin{subarray}{l}\fa \sim \fb\\integral\\ (q,\fa)=1 \end{subarray}}\chi(\fa)N(\fa)^{-s}\\
&= \sum_{(C,D)\in \tilde{F'}}\chi((C+D\delta)\fb)
 \sum_{
 \alpha\in(\frac{C+D\delta}{q}+\fb^{-1})^+/E_q^+}
 N(q\fb\alpha)^{-s}.
\end{split}
\end{equation*}
\end{prop}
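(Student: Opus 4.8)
The plan is to realize the index set of the defining sum for $L_K(s,\chi,\fb)$ as a disjoint union labelled by $\tilde{F'}$ and to pull a constant value of $\chi$ out of each piece. First I would parametrize the integral ideals in the sum. An integral ideal $\fa\sim\fb$ with $(q,\fa)=1$ is exactly $\fa=\gamma\fb$ for a totally positive $\gamma$; integrality $\gamma\fb\subseteq O_K$ forces $\gamma\in\fb^{-1}$, and $\gamma\fb=\gamma'\fb$ with both $\gamma,\gamma'$ totally positive holds if and only if $\gamma/\gamma'\in E^+$. Hence the summation runs over $\{\gamma\in(\fb^{-1})^+ : (q,(\gamma)\fb)=1\}/E^+$, with $N(\fa)^{-s}=N(\gamma\fb)^{-s}$ and $\chi(\fa)=\chi(\gamma\fb)$. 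Reducing $\gamma$ modulo $q\fb^{-1}$ and writing $\gamma\equiv C+D\delta$ with $0\le C,D\le q-1$, the coprimality $(q,(\gamma)\fb)=1$ matches the condition $((C+D\delta)\fb,q)=1$ defining $F$ (this is the content of Lemma \ref{rel}), so the reduction lands in $F$.

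Next I would regroup by this residue. Reduction modulo $q\fb^{-1}$ is $E^+$-equivariant for the action $\epsilon\ast(C+D\delta)$, so it descends to a map to $F'=F/E^+$. Fixing a representative $(C,D)\in\tilde{F'}$, every orbit $E^+\gamma$ lying above the $E^+$-orbit of $(C,D)$ has a representative with $\gamma\equiv C+D\delta\pmod{q\fb^{-1}}$ exactly, and two such representatives differ by a unit stabilizing $(C,D)$. By Lemma \ref{epsilon-length} that stabilizer is precisely $E_q^+$, so the single quotient by $E^+$ over the whole fiber above the orbit of $(C,D)$ collapses to the quotient by $E_q^+$ of the exact fiber $\{\gamma\in(\fb^{-1})^+:\gamma\equiv C+D\delta\}$. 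Writing $\gamma=q\alpha$ identifies this fiber with $\left(\tfrac{C+D\delta}{q}+\fb^{-1}\right)^+/E_q^+$, and $N(\gamma\fb)^{-s}=N(q\fb\alpha)^{-s}$, which produces exactly the inner sum in the claimed formula.

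Finally I would show $\chi$ is constant on each fiber, equal to $\chi((C+D\delta)\fb)$, so that it factors out in front. Since $\chi$ is a homomorphism on $I_K(q)/P_K(q)$ and $\gamma\fb\cdot\big((C+D\delta)\fb\big)^{-1}=\big(\gamma/(C+D\delta)\big)$, it suffices to check that $\gamma/(C+D\delta)$ generates an ideal in $P_K(q)$, i.e.\ has a totally positive generator congruent to $1$ modulo $q$. The congruence $\gamma/(C+D\delta)\equiv 1\pmod q$ is precisely the computation made in the proof of Lemma \ref{rel}; and the standing hypotheses $0<\delta'<1$, $\delta>2$ force $C+D\delta$ to be totally positive whenever $(C,D)\in F$ (as $C,D\ge0$ are not both zero and $\delta,\delta'>0$), so $\gamma/(C+D\delta)$ is itself totally positive. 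Thus $\chi(\gamma\fb)=\chi((C+D\delta)\fb)$ throughout the fiber, and assembling the pieces over $(C,D)\in\tilde{F'}$ yields the stated decomposition. I expect the main obstacle to be the orbit-and-stabilizer bookkeeping of the middle step, namely converting the single quotient $(\fb^{-1})^+/E^+$ into the disjoint union over $\tilde{F'}$ of $E_q^+$-quotients; the total positivity needed in the last step is a pleasant consequence of the hypotheses on $\delta$ rather than a separate difficulty.
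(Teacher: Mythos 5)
Your proposal is correct and follows essentially the same route as the paper: both decompose the sum by the residue $(C,D)$ of the generator modulo $q\fb^{-1}$, invoke Lemma \ref{rel} for the coprimality condition and Lemma \ref{epsilon-length} for the stabilizer, and evaluate $\chi$ on each fiber via the congruence $\gamma/(C+D\delta)\equiv 1\pmod q$. The only difference is presentational: where you use a direct orbit--stabilizer bijection to pass from the $E^+$-quotient to the $E_q^+$-quotients over $\tilde{F'}$, the paper introduces the index $\lambda=[E^+:E_q^+]$ twice (once enlarging the quotient, once via the decomposition $F=\bigsqcup_i\epsilon^i\tilde{F'}$) and cancels it.
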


\begin{proof}
For $\alpha_1, \alpha_2 \in(q^{-1}\fb^{-1})^+$, $q\alpha_1\fb = q\alpha_2\fb$ if and only if $\alpha_1/\alpha_2 \in E^+$.

So we have
$$
\sum_{\substack{\fa \sim\fb\\ integral \\ (q,\fa)=1}} \frac{\chi(\a)}{N(\a)^s} =
\sum_{\substack{\fa \sim q \fb\\ integral \\ (q,\fa)=1}} \frac{\chi(\a)}{N(\a)^s}
= \sum_{\substack{\alpha \in (q^{-1}\fb^{-1})^+/E^+ \\ (q,q\alpha\fb)=1} }
    \frac{\chi(q\alpha\fb)}{N(q\alpha\fb)^s}
$$

 We also have for a totally positive fundamental unit $\epsilon>1$
\begin{equation*}
\begin{split}
\sum_{\begin{subarray}{l}\alpha\in (q^{-1}\fb^{-1})^+/E_q^+\\(q,q\fb\alpha)=1 \end{subarray}}
\frac{\chi(q\fb\alpha)}{N(q\fb\alpha)^s}&=
\sum_{\begin{subarray}{l}\alpha\in (q^{-1}\fb^{-1})^+/E^+ \\
(q,q\fb\alpha) =1 \end{subarray}} \sum_{i=0}^{\lambda-1}
\frac{\chi(q\fb\alpha\epsilon^i)}{N(q\fb\alpha\epsilon^i)^s} \\
&=\lambda \cdot \sum_{\begin{subarray}{l}\alpha\in (q^{-1}\fb^{-1})^+/E^+ \\
(q,q\fb\alpha)=1 \end{subarray}}\frac{\chi(q\fb\alpha)}{N(q\fb\alpha)^s}  .
\end{split}
\end{equation*}

And  from Lemma \ref{rel}, we have
\begin{equation*}
\begin{split}
\sum_{\begin{subarray}{l}\alpha\in (q^{-1}\fb^{-1})^+/E_q^+\\ (q,q\fb\alpha)=1 \end{subarray}}\frac{\chi(q\fb\alpha)}{N(q\fb\alpha)^s}&=
\sum_{(C,D)\in F}\sum_{\begin{subarray}{l}\alpha\in (\frac{C+D\delta}{q}+\fb^{-1})^+/E_q^+\\ (q,q\fb\alpha)=1 \end{subarray}}\frac{\chi(q\fb\alpha)}{N(q\fb\alpha)^s} \\
&=
\sum_{(C,D)\in F}\sum_{\alpha\in (\frac{C+D\delta}{q}+\fb^{-1})^+/E_q^+}
\frac{\chi(q\fb\alpha)}{N(q\fb\alpha)^s} .
\end{split}
\end{equation*}

By equation (2), the above is equal to
$$\sum_{(C,D)\in \tilde{F'}}\sum_{i=0}^{\lambda-1}\sum_{\alpha\in (\frac{(C+D\delta)\epsilon^i}{q}+\fb^{-1})^+/E_q^+}\frac{\chi(q\fb\alpha)}{N(q\fb\alpha)^{s}}.$$
Since
$$\sum_{\alpha\in (\frac{(C+D\delta)\epsilon^i}{q}+\fb^{-1})^+/E_q^+}
\frac{\chi(q\fb\alpha)}{N(q\fb\alpha)^{s}}=\sum_{\alpha\in (\frac{(C+D\delta)}{q}+\fb^{-1})^+/E_q^+}\frac{\chi(q\fb\alpha\epsilon^i)}{N(q\fb\alpha \epsilon^i)^{s}},$$
the above also equal to
$$\lambda\cdot \sum_{(C,D)\in \tilde{F'}}\sum_{\alpha\in (\frac{C+D\delta}{q}+\fb^{-1})^+/E_q^+}\frac{\chi(q\fb\alpha)}{N(q\fb\alpha)^{s}}.$$

Note that for $\alpha\in(\frac{C+D\delta}{q}+\fb^{-1})^+$,
$q\fb\alpha$ and $(C+D\delta)\fb$ are in the same ray class modulo $q$.
Thus $\chi(q\fb\alpha)=\chi((C+D\delta)\fb)$.
This completes the proof.
\end{proof}


\subsection{Shintani-Zagier cone decomposition}

We review briefly the decomposition of $(\FR^2)^+$ into cones due to Shintani and Zagier in \cite{Zagier}, \cite{Shintani}, \cite{Van}.
This depends on a real quadratic field $K$ and a fixed ideal $\fa$ inside. Here for the sake of computation,  we fix $\fa=\fb^{-1}$ where $\fb$ is set as in the beginning of this section.

$K$ is embedded into $\FR^2$ by $\iota=(\tau_1,\tau_2)$, where $\tau_1,\tau_2$ are two real embeddings of $K$.
Especially the totally positive elements of $K$ lands on $(\FR^2)^+$.
We are going to describe the fundamental domain of  $(\frac{C+D\delta}{q}+\fb^{-1})^+/E_q^+$ embedded into $(\FR^2)^+$. 

The multiplicative action of ${E_q}^+$ on $K^+$ induces an action on $(\FR^2)^+$ by coordinate-wise multiplication: 
$$\epsilon\circ(x,y)=(\tau_1(\epsilon)x,\tau_2(\epsilon)y).$$

A fundamental domain $\frak{D}_{\FR}$ of $(\FR^2)^+/E_q^+$ 
is given by
\begin{equation}\label{fun}
\frak{D}_{\FR} := 
\{x\iota(1)+y\iota(\epsilon^{-\lambda})|x>0,y\geq0\} \subset (\FR^2)^+
\end{equation}
where $E_q^+=\left<\epsilon^{\lambda}\right>$ for an integer $\lambda$ and $\epsilon>1$ is the unique totally positive fundamental unit. 

If we take the convex hull of  $\iota(\fb^{-1})\cap(\FR^2)^+$ in $(\FR^2)^+$, the vertices on the boundary are  $\{P_i\}_{i\in\FZ}$ for $P_i \in \iota(\fb^{-1})$ and determined by the inequalities that
$P_0=\iota(1), P_{-1}=\iota(\delta)$ and $x(P_i)<x(P_{i-1})$ where $x(P_k)$ denotes the first coordinate of $P_k$ for 
$k\in\FZ$. 
Since any two consecutive boundary points make a basis of $\iota(\fb^{-1})$, we find that
 $$\left(\begin{array}{cc}0 & 1 \\-1 & b_i\end{array}\right)\left(\begin{array}{c}P_{i-1} \\P_{i}\end{array}\right)=\left(\begin{array}{c}P_i \\P_{i+1}\end{array}\right),$$
for an integer $b_i$.  It is easy to see that  $b_i\geq2$ from the convexity.
Thus we obtain
\begin{equation}\label{recu}
x(P_{i-1})+x(P_{i+1})=b_ix(P_i).
\end{equation}

Put $\delta_i:=\frac{x(P_{i-1})}{x(P_i)}>1$. Note that $\delta_0 = \delta$.
$\delta_i$ satisfies a recursive relation: 
$$\delta_i=b_i-\frac{1}{\delta_{i+1}},\quad \text{for $i\in\FZ$}.$$

Therefore
$$
\delta_i=
b_i-\cfrac{1}{b_{i+1}-\cfrac{1}{b_{i+2}-\dotsb}}=(b_i,b_{i+1},b_{i+2}\cdots).
$$

Let $\epsilon>1$ be the totally positive  fundamental  unit. 
$\epsilon$ moves a boundary point to another boundaty point preserving the order. 
Thus we have
\begin{equation}\label{pi}
\epsilon\circ P_i=P_{i-m},
\end{equation}
for a positive integer $m$. 
Therefore 
we obtain the following proposition.

\begin{prop}\label{pr}

\begin{enumerate}
  \item $\delta_{i+m}=\delta_{i}$ for all $i\in\FZ$. 
  \item $\delta_i=((b_i,b_{i+1},\cdots,b_{i+m-1}))=b_i-\cfrac{1}{b_{i+1}-\dotsb\cfrac{1}{b_{i+m-1}-\cfrac{1}{b_i-\dotsb}}}.$
  \item $\iota(\epsilon^{-1})=P_m$
  \item $\epsilon^{-1} \circ P_i=P_{i+m}$ 
  \item $\iota(\epsilon^{-\gamma})=P_{\gamma m}$
\end{enumerate}

\end{prop}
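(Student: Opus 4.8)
The plan is to deduce all five assertions from the single geometric input \eqref{pi}, namely $\epsilon\circ P_i=P_{i-m}$, together with the elementary observation that the coordinate-wise $E_q^+$-action is intertwined with multiplication in $K$ via $\iota$. Indeed, since $\iota=(\tau_1,\tau_2)$ and $\epsilon\circ(x,y)=(\tau_1(\epsilon)x,\tau_2(\epsilon)y)$, for every $\alpha\in K$ we have $\epsilon\circ\iota(\alpha)=(\tau_1(\epsilon)\tau_1(\alpha),\tau_2(\epsilon)\tau_2(\alpha))=\iota(\epsilon\alpha)$; in particular $\epsilon^{-1}$ is again totally positive and $\epsilon^{-1}\circ\iota(\alpha)=\iota(\epsilon^{-1}\alpha)$. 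This intertwining identity is the only structural fact I need beyond \eqref{pi}.

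First I would establish the formal statements (4), (3), (5). Applying $\epsilon^{-1}\circ$ to both sides of \eqref{pi} gives $\epsilon^{-1}\circ P_{i-m}=P_i$, and reindexing $i\mapsto i+m$ yields $\epsilon^{-1}\circ P_i=P_{i+m}$, which is (4). Specializing (4) to $i=0$ and using $P_0=\iota(1)$ with the intertwining identity gives $\iota(\epsilon^{-1})=\epsilon^{-1}\circ\iota(1)=\epsilon^{-1}\circ P_0=P_m$, which is (3). Finally, iterating (4) from $P_0$ a total of $\gamma$ times (equivalently applying the intertwining identity to $\epsilon^{-\gamma}$) gives $\iota(\epsilon^{-\gamma})=\epsilon^{-\gamma}\circ P_0=P_{\gamma m}$, which is (5).

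Next I would prove the periodicity (1). Reading off the first coordinate in \eqref{pi} and using $x(\epsilon\circ P)=\tau_1(\epsilon)x(P)$ gives $x(P_{i-m})=\tau_1(\epsilon)x(P_i)$, equivalently $x(P_{i+m})=\tau_1(\epsilon)^{-1}x(P_i)$ for all $i\in\FZ$. Since $\delta_i=x(P_{i-1})/x(P_i)$ is a ratio of first coordinates, the common factor cancels, $\delta_{i+m}=x(P_{i+m-1})/x(P_{i+m})=x(P_{i-1})/x(P_i)=\delta_i$, which is (1). Then (2) follows by upgrading the recursion: from $b_i=\delta_i+\delta_{i+1}^{-1}$ and (1) we get $b_{i+m}=b_i$, so the partial quotients are periodic of period $m$, and feeding this into $\delta_i=b_i-\tfrac{1}{\delta_{i+1}}=(b_i,b_{i+1},\dotsc)$ collapses the infinite minus continued fraction to the purely periodic $\delta_i=((b_i,\dotsc,b_{i+m-1}))$.

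The argument is essentially bookkeeping once \eqref{pi} is available, so there is no serious obstacle; the only points demanding care are the consistent handling of the index shift (the $-m$ appearing in the action versus the $+m$ appearing in the geometry) and the verification that $m>0$, so that the periods in (1)--(2) and the exponents in (5) carry the stated sign. Both are guaranteed by the order-preservation of the boundary points recorded just before \eqref{pi}, and by the fact that $\epsilon>1$ forces $\tau_1(\epsilon)>1$, which makes $x(P_{i+m})<x(P_i)$ consistent with the strict monotonicity of $x(P_\bullet)$.
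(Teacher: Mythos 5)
Your proposal is correct and follows essentially the same route as the paper: everything is deduced from the relation $\epsilon\circ P_i=P_{i-m}$ together with the compatibility of the coordinate-wise action with multiplication in $K$, with (1) coming from cancellation of the common factor in the ratio $x(P_{i-1})/x(P_i)$ and (2)--(5) being formal consequences. Your write-up is slightly more careful about the sign of the index shift (the paper's displayed ratio writes the scaling factor as $\epsilon$ rather than $\epsilon^{-1}$, which is harmless since it cancels), but the argument is the same.
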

\begin{proof}
(1) $\delta_{i+m}=\frac{x(P_{i+m-1})}{x(P_{i+m})}=\frac{\epsilon x(P_{i-1})}{\epsilon x(P_i)}=\delta_i$.

(2) This is an immediate consequence of 1.

(3) From Eq. (\ref{pi}),
$$P_m=\epsilon^{-1}\circ P_{0}.$$
Since $P_0=\iota(1)$ and $\epsilon^{-1}\circ \iota(1)=\iota(\epsilon^{-1})$. 

(4) This is immediate from (\ref{pi}).

(5) It is trivial from (3) and (4). 
\end{proof}




From (\ref{fun}) and (4) of Proposition\ref{pr}, $\frak{D}_\FR$  the fundamental domain $(\FR^2)^+/E_q^+$ 
is further decomposed into 
$(\lambda\cdot m)$-disjoint union of smaller 
cones:
$$
\frak{D}_\FR = 
\bigsqcup_{i=1}^{\lambda m}\{xP_{i-1}+yP_i\,\,|\,\,x>0,\,\,y\geq0\}.
$$

Obviously the fundamental set of the quotient
$(\iota(\frac{C+D\delta}{q}+\fb^{-1}) \bigcap {(\FR^2)}^+)/E_q^+$ inside $\frak{D}_\FR$, which
we denote by $\frak{D}$
is given by a disjoint union:
$$
\frak{D} := \bigsqcup_{i=1}^{\lambda m}
\Big{(}\iota(\frac{C+D\delta}{q}+\fb^{-1})\bigcap\{xP_{i-1}+yP_i\,\,|\,\,x>0,\,\,y\geq0\} \Big{)}.
$$

Since $\{P_{i-1},P_i\}$ is a $\FZ$-basis of $\iota(\fb^{-1})$,
there is a unique $(x_{C+D\delta}^i,y_{C+D\delta}^i)\in (0,1]\times[0,1)$
such that
$$
x_{C+D\delta}^iP_{i-1}+y_{C+D\delta}^iP_i\in\iota(\frac{C+D\delta}{q}+\fb^{-1}),
$$
for each $i,C,D\in\FZ.$
Thus
\begin{equation}\label{set}
\begin{split}
\iota\big(\frac{C+D\delta}{q}+\fb^{-1}\big)\bigcap\{xP_{i-1}+yP_i\,\,|\,\,x>0,\,\,y\geq0\} \\
=\{(x_{C+D\delta}^i+n_1)P_{i-1}+(y_{C+D\delta}^i+n_2)P_i\,\,|\,\, n_1, n_2 \in\FZ_{\geq0}\}.
\end{split}
\end{equation}

In \cite{Yamamoto}, Yamamoto found a recursive relation satisfied by $(x_{C+D\delta}^i,y_{C+D\delta}^i)$: 
\begin{equation}\label{Yam}
\begin{split}
x_{C+D\delta}^{i+1} & =\langle  b_ix_{C+D\delta}^i+y_{C+D\delta}^i\rangle,\\
y_{C+D\delta}^{i+1} & =1-x_{C+D\delta}^i,
\end{split}
\end{equation}
where $\langle\cdot\rangle$ is as defined at the end of the introduction. 
(ie.
$
\langle x \rangle=
 x-[x]$ (resp. $1$) for $x\not\in \FZ$ (resp. for $x\in \FZ$)).((2.1.3) of {\it loc.  sit.}).

Let  $A_i:=x(P_i)$ for all $i\in\FZ$. Then from Eq.(\ref{set}), we obtain the following: 
\begin{equation}\label{e1}
\begin{split}
& \sum_{\alpha \in  (\frac{C+D\delta}{q}+\fb^{-1})^+/E_q^+}
\frac{1}{N(\alpha)^s} \\
=&\sum_{i=1}^{\lambda m}\sum_{n_1,n_2\geq0}N((x_{C+D\delta}^i+n_1)A_{i-1}+(y_{C+D\delta}^i+n_2)A_{i})^{-s}\\
=&\sum_{i=1}^{\lambda m}\sum_{n_1,n_2\geq0}N((x_{C+D\delta}^i+n_1)\delta_i+(y_{C+D\delta}^i+n_2))^{-s} A_i^{-s}.
\end{split}
\end{equation}

In \cite{Shintani}, Shintani  evaluated $\sum_{n_1,n_2\geq0}N((x+n_1)\delta+(y+n_2))^{-s}$ at nonpositive integers. 
In particular, the value at $s=0$ is expressed by first and second Bernoulli polynomials as follows:
\begin{lem}[Shintani]\label{shintani}
\begin{equation*}
\begin{split}
&\sum_{n_1,n_2\geq0}N((x+n_1)\delta+(y+n_2))^{-s}\Big{|}_{s=0}\\
=&\frac{\delta+\delta'}{4}B_2(x)+B_1(x)B_1(y)+\frac{1}{4}(\frac{1}{\delta}+\frac{1}{\delta'})B_2(y).
\end{split}
\end{equation*}
\end{lem}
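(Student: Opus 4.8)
The plan is to realize the double sum as a Shintani cone zeta function and to read its value at $s=0$ off the Laurent expansion of an associated generating function. Write $L_1=(x+n_1)\delta+(y+n_2)$ and $L_2=(x+n_1)\delta'+(y+n_2)$, so that $N((x+n_1)\delta+(y+n_2))=L_1L_2$; since $\delta>2$ and $0<\delta'<1$ both forms are positive on the quadrant $n_1,n_2\ge0$, so the series converges for $\mathrm{Re}(s)>1$. First I would insert the Gamma integral $\Gamma(s)^2(L_1L_2)^{-s}=\iint_{u_1,u_2>0}(u_1u_2)^{s-1}e^{-u_1L_1-u_2L_2}\,du_1\,du_2$ and sum the two geometric series in $n_1,n_2$. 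Setting $v_1=u_1\delta+u_2\delta'$ and $v_2=u_1+u_2$, this yields
\begin{equation*}
\Gamma(s)^2\sum_{n_1,n_2\ge0}N((x+n_1)\delta+(y+n_2))^{-s}=\iint_{u_1,u_2>0}(u_1u_2)^{s-1}\,\frac{e^{-xv_1}}{1-e^{-v_1}}\cdot\frac{e^{-yv_2}}{1-e^{-v_2}}\,du_1\,du_2.
\end{equation*}

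Next I would expand each factor by the Bernoulli generating function $\frac{e^{-xv}}{1-e^{-v}}=\sum_{k\ge0}(-1)^kB_k(x)\frac{v^{k-1}}{k!}=\frac1v-B_1(x)+\frac{B_2(x)}{2}v-\cdots$. Since $v_1,v_2$ are linear in $(u_1,u_2)$, the integrand $G$ becomes a sum of functions homogeneous of degrees $-2,-1,0,1,\dots$ in $(u_1,u_2)$. A scaling argument shows that the homogeneous part of degree $d$ produces a pole of the double Mellin transform at $s=-d/2$, so only the degree-zero part governs the behaviour at $s=0$. That part is exactly
\begin{equation*}
\frac{B_2(x)}{2}\frac{v_1}{v_2}+B_1(x)B_1(y)+\frac{B_2(y)}{2}\frac{v_2}{v_1}.
\end{equation*}

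The decisive step is to extract the value at $s=0$ from this degree-zero piece. Because $\Gamma(s)^2$ has a double pole at $s=0$, the sought value equals the coefficient of $s^{-2}$ of the Mellin integral, and a careful contour deformation (Hankel contours, as in Shintani and Zagier) shows that a degree-zero homogeneous rational function $R(u_1,u_2)$ contributes through the symmetric average of its two edge limits, $\tfrac12\big(R|_{u_2=0}+R|_{u_1=0}\big)$, while the genuine constant passes through unchanged. Applying this to the three summands gives $\tfrac12(\frac{v_1}{v_2}|_{u_2=0}+\frac{v_1}{v_2}|_{u_1=0})=\tfrac12(\delta+\delta')$ and $\tfrac12(\frac{v_2}{v_1}|_{u_2=0}+\frac{v_2}{v_1}|_{u_1=0})=\tfrac12(\frac1\delta+\frac1{\delta'})$, which reproduces
\begin{equation*}
\frac{\delta+\delta'}{4}B_2(x)+B_1(x)B_1(y)+\frac14\Big(\frac1\delta+\frac1{\delta'}\Big)B_2(y).
\end{equation*}

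The main obstacle is precisely this last extraction: the two linear forms $v_1,v_2$ couple the variables, so the double Mellin transform does not factor and its pole structure at $s=0$ must be analyzed by the two-dimensional Shintani/Hankel contour machinery rather than by a one-variable Hurwitz computation. Making the edge-averaging functional and its weights rigorous—equivalently, justifying the contour shifts and isolating the $s^{-2}$ coefficient—is the technical heart. Alternatively, one may simply invoke Shintani's general evaluation of cone zeta functions at nonpositive integers \cite{Shintani} and specialize the resulting Bernoulli expression to the forms $L_1,L_2$ above.
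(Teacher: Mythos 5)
The paper offers no proof of this lemma at all: it is quoted as Shintani's result with \cite{Shintani} as the sole justification, so there is no in-paper argument to compare against. Your sketch reconstructs the standard Shintani--Zagier derivation behind that citation, and the bookkeeping checks out: the Gamma-integral representation, the geometric summation producing the factors $e^{-xv_1}/(1-e^{-v_1})$ and $e^{-yv_2}/(1-e^{-v_2})$ with $v_1=u_1\delta+u_2\delta'$, $v_2=u_1+u_2$, the Bernoulli expansion $\frac{e^{-xv}}{1-e^{-v}}=\frac1v-B_1(x)+\frac{B_2(x)}2v-\cdots$, and the identification of the degree-zero homogeneous part $\frac{B_2(x)}2\frac{v_1}{v_2}+B_1(x)B_1(y)+\frac{B_2(y)}2\frac{v_2}{v_1}$ are all correct, and your edge-average rule does reproduce the coefficients $\frac{\delta+\delta'}4$ and $\frac14(\frac1\delta+\frac1{\delta'})$ of the statement.

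The one genuine gap is the one you flag yourself: the rule that a degree-zero homogeneous piece $R(u_1,u_2)$ contributes $\frac12\bigl(R(1,0)+R(0,1)\bigr)$ at $s=0$ is asserted, not proved, and that is where all the analysis lives. It can be closed without Hankel contours: split the quadrant into the sectors $u_1\ge u_2$ and $u_2>u_1$, in the first put $u_1=u$, $u_2=tu$ with $t\in[0,1]$, and split the radial integral as $\int_0^1+\int_1^\infty$. The piece over $u>1$ is entire in $s$ and is killed by the factor $\Gamma(s)^{-2}\sim s^2$; on $u\in(0,1)$ the degree-$d$ term of the integrand yields $\frac1{2s+d}\int_0^1 t^{s-1}\widetilde G_d(t)\,dt$, which has a double pole at $s=0$ only when $d=0$, with leading coefficient $\frac12\widetilde G_0(0)=\frac12 G_0(1,0)$; the second sector supplies $\frac12 G_0(0,1)$, and all $d\ne 0$ terms contribute at most simple poles, hence vanish after multiplication by $s^2$. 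Writing out that sector computation would make your argument self-contained; otherwise the honest fallback --- which is exactly what the paper does --- is to invoke Shintani's Proposition~1 in \cite{Shintani} (or the equivalent computation in \cite{Zagier}) and specialize to the forms $L_1,L_2$.
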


Using this,  we have 
\begin{equation}\label{eqcd}
\begin{split}
& \sum_{\alpha \in  (\frac{C+D\delta}{q}+\fb^{-1})^+/E_q^+}
\frac{1}{N(\alpha)^s}\Big{|}_{s=0} \\
=&\sum_{i=1}^{\lambda m} \frac{\delta_i+\delta_i'}{4}B_2(x_{C+D\delta}^i)+B_1(x_{C+D\delta}^i)B_1(y_{C+D\delta}^i)+\frac{1}{4}(\frac{1}{\delta_i}+\frac{1}{\delta_i'})B_2(y_{C+D\delta}^i)
\end{split}
\end{equation}

Moreover, Yamamoto in the proof of Theorem 4.1.1 of \cite{Yamamoto} simplified  the above:
\begin{lem}[Yamamoto]\label{yamamoto}
\begin{equation*}
\begin{split}
&\sum_{i=1}^{\lambda m}\frac{\delta_i+\delta_i'}{4}B_2(x_{C+D\delta}^i)+\frac{1}{4}(\frac{1}{\delta_i}+\frac{1}{\delta_i'})B_2(y_{C+D\delta}^i)\\
=&\sum_{i=1}^{\lambda m}\frac{b_i}{2}B_2(x_{C+D\delta}^i)
\end{split}
\end{equation*}
\end{lem}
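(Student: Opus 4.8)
The plan is to reduce the identity to two elementary ingredients: the symmetry $B_2(1-x)=B_2(x)$ of the second Bernoulli polynomial, and a ``trace relation'' extracted from the minus continued fraction recursion $\delta_i=b_i-\frac{1}{\delta_{i+1}}$. To lighten notation I would abbreviate $x_i:=x^i_{C+D\delta}$ and $y_i:=y^i_{C+D\delta}$, and set $T_i:=\delta_i+\delta_i'$ and $S_i:=\frac{1}{\delta_i}+\frac{1}{\delta_i'}$, so that the left-hand side to be simplified reads $\frac14\sum_{i=1}^{\lambda m}\big(T_iB_2(x_i)+S_iB_2(y_i)\big)$ and the target is $\frac14\sum_{i=1}^{\lambda m}2b_iB_2(x_i)$.

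First I would take the Galois conjugate of the recursion $\delta_i=b_i-\frac{1}{\delta_{i+1}}$. Since $b_i\in\FZ$ is fixed by conjugation, this gives $\delta_i'=b_i-\frac{1}{\delta_{i+1}'}$ as well; adding the two relations yields the identity $S_{i+1}=2b_i-T_i$. This is the crucial algebraic input, linking the trace of $\delta_i$ to the conjugate-sum of $\delta_{i+1}$.

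Next I would exploit periodicity to shift the index in the second sum. The quantities $\delta_i$ and $b_i$ are $m$-periodic, and the representatives $x_i,y_i$ are $\lambda m$-periodic: indeed $E_q^+=\langle\epsilon^\lambda\rangle$ fixes the coset $\frac{C+D\delta}{q}+\fb^{-1}$, since $\epsilon^\lambda\equiv1\pmod q$ forces $\frac{\epsilon^\lambda-1}{q}(C+D\delta)\in\fb^{-1}$, while by Proposition \ref{pr}(4) the unit $\epsilon^{\pm\lambda}$ shifts the cone index by $\lambda m$; matching the unique representative cone by cone gives $(x_{i+\lambda m},y_{i+\lambda m})=(x_i,y_i)$. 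Therefore $\frac{S_i}{4}B_2(y_i)$ is $\lambda m$-periodic, and as a cyclic sum over one full period $\sum_{i=1}^{\lambda m}S_iB_2(y_i)=\sum_{i=1}^{\lambda m}S_{i+1}B_2(y_{i+1})$. Now I substitute Yamamoto's recursion $y_{i+1}=1-x_i$ from Eq. \eqref{Yam}, use $B_2(1-x_i)=B_2(x_i)$, and replace $S_{i+1}$ by $2b_i-T_i$; the second sum becomes $\sum_{i=1}^{\lambda m}(2b_i-T_i)B_2(x_i)$. Adding the untouched first sum $\sum_{i=1}^{\lambda m}T_iB_2(x_i)$, the $T_i$-terms cancel and one is left with $\frac14\sum_{i=1}^{\lambda m}2b_iB_2(x_i)=\sum_{i=1}^{\lambda m}\frac{b_i}{2}B_2(x_i)$, which is exactly the claim.

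The only genuine subtlety, and hence the step I would treat most carefully, is justifying the cyclic index shift: namely establishing the $\lambda m$-periodicity of $x_i$ and $y_i$ and checking that $\epsilon^\lambda$ permutes the cones by the shift $i\mapsto i+\lambda m$ compatibly with the invariance of the lattice coset. Once $S_{i+1}=2b_i-T_i$ and the Bernoulli symmetry are in place, the remainder is a purely formal cancellation.
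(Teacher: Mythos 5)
Your proof is correct, and it is essentially the argument the paper leans on implicitly: the paper gives no proof of this lemma, citing instead the proof of Theorem 4.1.1 of Yamamoto, and your derivation (conjugating the three-term recursion to get $\tfrac{1}{\delta_{i+1}}+\tfrac{1}{\delta_{i+1}'}=2b_i-(\delta_i+\delta_i')$, shifting the cyclic sum by one using the $\lambda m$-periodicity of $(x_i,y_i)$, substituting $y_{i+1}=1-x_i$, and invoking $B_2(1-x)=B_2(x)$) is exactly the standard computation behind that citation. The periodicity you flag as the delicate point is in fact already supplied by Lemma \ref{epsilon} combined with Lemma \ref{epsilon-length} (take $i=\lambda$ there, so that $\epsilon^{\lambda}\in E_q^+$ acts trivially on $(C,D)$), so your argument is complete as written.
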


Finally, we have

\begin{equation}\label{fi}
\begin{split}
& \sum_{\alpha \in  (\frac{C+D\delta}{q}+\fb^{-1})^+/E_q^+}
\frac{1}{N(\alpha)^s}\Big{|}_{s=0} \\
=&\sum_{i=1}^{\lambda m} B_1(x_{C+D\delta}^i)B_1(y_{C+D\delta}^i)+\frac{b_i}{2}B_2(x_{C+D\delta}^i)
\end{split}
\end{equation}

\begin{lem}\label{epsilon}
Let $\epsilon$ be the totally positive fundamental  unit of $K$ and $\lambda:=[E^+:E_q^+]$. Then we have
$$
x_{C+D\delta}^{mi+j}=x_{\epsilon^{i}\ast(C+D\delta)}^j \quad\text{and}\quad y_{C+D\delta}^{mi+j}=y_{\epsilon^{i}\ast(C+D\delta)}^j,
$$
for $j=0,1,2,\cdots,m-1$.
\end{lem}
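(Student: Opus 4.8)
The plan is to reduce the whole statement to the single ``shift-by-$m$'' identity
$$
x_{C+D\delta}^{m+j}=x_{\epsilon\ast(C+D\delta)}^{j},\qquad y_{C+D\delta}^{m+j}=y_{\epsilon\ast(C+D\delta)}^{j},
$$
valid for \emph{every} $j\in\FZ$, and then to iterate it. Since $\ast$ is an action of $E^{+}$ on $F$ (it factors through $E^{+}/E_{q}^{+}$ by Lemma \ref{epsilon-length}), applying the shift identity with base point $\epsilon\ast(C+D\delta)$ and index $m(i-1)+j$, and repeating $i$ times, yields $x_{C+D\delta}^{mi+j}=x_{\epsilon^{i}\ast(C+D\delta)}^{j}$ and likewise for $y$, which is the assertion for $0\le j\le m-1$. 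So everything rests on the base case, which I would prove for all integers $j$ at once.

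For the base case I would start from the defining membership of $(x_{C+D\delta}^{m+j},y_{C+D\delta}^{m+j})$ in the cone spanned by $P_{m+j-1},P_{m+j}$, namely
$$
x_{C+D\delta}^{m+j}P_{m+j-1}+y_{C+D\delta}^{m+j}P_{m+j}\in\iota\bigl(\tfrac{C+D\delta}{q}+\fb^{-1}\bigr),
$$
and rewrite the two boundary points via part (4) of Proposition \ref{pr}, $P_{m+j-1}=\epsilon^{-1}\circ P_{j-1}$ and $P_{m+j}=\epsilon^{-1}\circ P_{j}$. The action $\epsilon^{-1}\circ$ is $\FR$-linear on $\FR^{2}$, so the left-hand side equals $\epsilon^{-1}\circ\bigl(x_{C+D\delta}^{m+j}P_{j-1}+y_{C+D\delta}^{m+j}P_{j}\bigr)$; and because $K\hookrightarrow\FR^{2}$ is a ring embedding, $\epsilon\circ\iota(\alpha)=\iota(\epsilon\alpha)$. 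Applying the bijection $\epsilon\circ$ to both sides and using $\epsilon\circ\iota(\fb^{-1})=\iota(\epsilon\fb^{-1})=\iota(\fb^{-1})$ (as $\epsilon$ is a unit) turns the membership into
$$
x_{C+D\delta}^{m+j}P_{j-1}+y_{C+D\delta}^{m+j}P_{j}\in\iota\bigl(\tfrac{\epsilon(C+D\delta)}{q}+\fb^{-1}\bigr).
$$

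To finish I would identify the target coset. By the very definition of $\ast$ we have $\epsilon(C+D\delta)=\bigl(\epsilon\ast(C+D\delta)\bigr)+q\beta$ for some $\beta\in\fb^{-1}$, hence $\tfrac{\epsilon(C+D\delta)}{q}+\fb^{-1}=\tfrac{\epsilon\ast(C+D\delta)}{q}+\fb^{-1}$. Thus $(x_{C+D\delta}^{m+j},y_{C+D\delta}^{m+j})$ are coordinates, in the basis $\{P_{j-1},P_{j}\}$, of a point of $\iota\bigl(\tfrac{\epsilon\ast(C+D\delta)}{q}+\fb^{-1}\bigr)$ lying in $(0,1]\times[0,1)$. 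Since such a representative is unique---this is exactly the characterization of $(x^{j}_{\bullet},y^{j}_{\bullet})$ preceding (\ref{set})---it must equal $(x_{\epsilon\ast(C+D\delta)}^{j},y_{\epsilon\ast(C+D\delta)}^{j})$, giving the base case.

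The point requiring the most care is the bookkeeping of the half-open normalization: I must verify that $\epsilon^{-1}\circ$ sends the ordered basis $\{P_{j-1},P_{j}\}$ to $\{P_{m+j-1},P_{m+j}\}$ \emph{without altering the coordinate pair}, so that the constraint $(x,y)\in(0,1]\times[0,1)$ is literally preserved and the uniqueness underlying (\ref{set}) applies verbatim. This is where the order-preserving property of $\epsilon$ on the boundary polygon (the property used to establish (\ref{pi})) is implicitly invoked; beyond that, the argument is a formal consequence of the $\FR$-linearity of the cone action and the definition of $\ast$.
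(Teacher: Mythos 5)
Your argument is correct and is essentially the paper's own proof: both rest on $P_{mi+j}=\epsilon^{-i}\circ P_j$ from Proposition \ref{pr}, the linearity of the coordinatewise action, the coset identity $\frac{\epsilon(C+D\delta)}{q}+\fb^{-1}=\frac{\epsilon\ast(C+D\delta)}{q}+\fb^{-1}$ coming from the definition of $\ast$, and the uniqueness of the representative in $(0,1]\times[0,1)$. The only cosmetic difference is that you prove the shift by $m$ and iterate, whereas the paper applies the same computation directly with $\epsilon^{-i}$ for general $i$.
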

\begin{proof}
From (4) of  Proposition \ref{pr}, we have 
$$A_{mi+j}=\epsilon^{-i} A_j,$$
for any integer $i$. 

Thus
\begin{equation*}
\begin{split}
x_{C+D\delta}^{mi+j}A_{mi+j-1}+y_{C+D\delta}^{mi+j}A_{mi+j}&=\\
x_{C+D\delta}^{mi+j}\epsilon^{-i}A_{j-1}+y_{C+D\delta}^{mi+j}\epsilon^{-i}A_{j}
&\in \frac{C+D\delta}{q}+\fb^{-1}.
\end{split}
\end{equation*}
Therefore,
$$x_{C+D\delta}^{mi+j}A_{j-1}+y_{C+D\delta}^{mi+j}A_j\in \frac{\epsilon^{i}\cdot(C+D\delta)}{q}+\fb^{-1}.$$
\end{proof}
From Lemma \ref{epsilon} and the periodicity of $b_i$, we have
\begin{lem}\label{sum}
\begin{equation*}
\begin{split}
&\sum_{\alpha\in(\frac{C+D\delta}{q}+\fb^{-1})^+/E_q^+}\frac{1}{N(\alpha)^s}\Big{|}_{s=0}\\
=&\sum_{i=1}^{m}\sum_{j=0}^{\lambda -1} B_1(x_{\epsilon^{j}\ast(C+D\delta)}^i)B_1(y_{\epsilon^{j}\ast(C+D\delta)}^i)+
\frac{b_i}{2}B_2(x_{\epsilon^{j}\ast(C+D\delta)}^i).
\end{split}
\end{equation*}
\end{lem}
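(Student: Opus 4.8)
The plan is to start from the closed form \eqref{fi} for the value at $s=0$ and merely regroup its $\lambda m$ summands into $\lambda$ consecutive blocks of length $m$, matching each block with one orbit representative $\epsilon^{j}\ast(C+D\delta)$. Concretely, I would reindex the single sum $\sum_{i=1}^{\lambda m}$ by writing $i=mj+\ell$ with $j$ ranging over $0,1,\dots,\lambda-1$ and $\ell$ over $1,2,\dots,m$; since $mj+\ell$ then runs through $1,\dots,\lambda m$ exactly once, (\ref{fi}) becomes
$$
\sum_{j=0}^{\lambda-1}\sum_{\ell=1}^{m}\Big(B_1(x_{C+D\delta}^{mj+\ell})B_1(y_{C+D\delta}^{mj+\ell})+\frac{b_{mj+\ell}}{2}B_2(x_{C+D\delta}^{mj+\ell})\Big).
$$

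The heart of the argument is then to identify the $(j,\ell)$-summand with the corresponding summand of the claimed expression. For the coefficient I would invoke the periodicity of the $b_i$: from Proposition \ref{pr}(1) we have $\delta_{i+m}=\delta_i$, and since the recursion $\delta_i=b_i-\frac{1}{\delta_{i+1}}$ gives $b_i=\delta_i+\frac{1}{\delta_{i+1}}$, two consecutive $\delta$'s determine $b_i$, forcing $b_{i+m}=b_i$ and hence $b_{mj+\ell}=b_\ell$. For the coordinates I would apply Lemma \ref{epsilon}, which gives precisely $x_{C+D\delta}^{mj+\ell}=x_{\epsilon^{j}\ast(C+D\delta)}^{\ell}$ and $y_{C+D\delta}^{mj+\ell}=y_{\epsilon^{j}\ast(C+D\delta)}^{\ell}$ for $\ell=0,1,\dots,m-1$. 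Substituting these into the display above collapses it term by term into $\sum_{i=1}^m\sum_{j=0}^{\lambda-1}$ of the asserted Bernoulli expression, which is exactly the statement (after renaming $\ell$ to $i$).

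The one point demanding care — and the only genuine obstacle — is the boundary index $\ell=m$, where Lemma \ref{epsilon} as stated (covering only $\ell=0,\dots,m-1$) does not literally apply. Here I would use the same lemma once more: the global index is $mj+m=m(j+1)+0$, so Lemma \ref{epsilon} gives $x_{C+D\delta}^{m(j+1)+0}=x_{\epsilon^{j+1}\ast(C+D\delta)}^{0}$, while applying the lemma to the element $\epsilon^{j}\ast(C+D\delta)$ with block index $1$ and within-block index $0$ yields $x_{\epsilon^{j}\ast(C+D\delta)}^{m}=x_{\epsilon^{j+1}\ast(C+D\delta)}^{0}$; comparing the two shows $x_{C+D\delta}^{mj+m}=x_{\epsilon^{j}\ast(C+D\delta)}^{m}$, the desired matching at $\ell=m$ (and likewise for $y$, and for the coefficient via $b_m=b_0$ from periodicity). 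With this boundary case settled, the regrouping is an equality term by term and the lemma follows.
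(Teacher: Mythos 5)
Your proposal is correct and follows the same route the paper intends: the paper derives the lemma directly from equation (\ref{fi}) by citing Lemma \ref{epsilon} and the periodicity $b_{i+m}=b_i$, which is exactly your regrouping of the sum $\sum_{i=1}^{\lambda m}$ into $\lambda$ blocks of length $m$. Your explicit treatment of the boundary index $\ell=m$ (via $x_{C+D\delta}^{m(j+1)}=x_{\epsilon^{j+1}\ast(C+D\delta)}^{0}=x_{\epsilon^{j}\ast(C+D\delta)}^{m}$) is a detail the paper leaves implicit, and it is handled correctly.
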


Finally, we have
\begin{prop}\label{hecke0} For a ray class character $\chi$ modulo $q$ and  an ideal $\fb$ of $K$ such that 
$$\fb^{-1}=[1,\delta]$$  
for $\delta\in K$ with $\delta>2$ and  $0<\delta'<1$, we have 
 
\begin{equation*}\begin{split}
&L_K(0,\chi,\fb)\\
&=\sum_{1\leq C,D \leq q}\chi((C+D\delta)\fb)\sum_{i=1}^{m} B_1(x_{(C+D\delta)}^i)B_1(y_{C+D\delta}^i)+\frac{b_i}{2}B_2(x_{C+D\delta}^i)
\end{split}
\end{equation*}
\end{prop}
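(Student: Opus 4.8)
The plan is to combine Proposition \ref{index} with the cone evaluation recorded in Lemma \ref{sum}, and then to repackage the orbit-wise sum over $\tilde{F'}$ into a sum over the full residue box by exploiting the $E^+$-action. First I would evaluate the identity of Proposition \ref{index} at $s=0$. Writing $N(q\fb\alpha)^{-s}=(q^{2}N(\fb))^{-s}N(\alpha)^{-s}$ and noting that the scalar factor $(q^{2}N(\fb))^{-s}$ is entire with value $1$ at $s=0$, the special value becomes
\begin{equation*}
L_K(0,\chi,\fb)=\sum_{(C,D)\in \tilde{F'}}\chi((C+D\delta)\fb)\,\Big[\sum_{\alpha\in(\frac{C+D\delta}{q}+\fb^{-1})^+/E_q^+}N(\alpha)^{-s}\Big]_{s=0}.
\end{equation*}
Substituting Lemma \ref{sum} for the bracketed inner value turns this into a triple sum over $(C,D)\in\tilde{F'}$, over $j=0,\dots,\lambda-1$, and over $i=1,\dots,m$, whose summand is $B_1(x^i_{\epsilon^j\ast(C+D\delta)})B_1(y^i_{\epsilon^j\ast(C+D\delta)})+\frac{b_i}{2}B_2(x^i_{\epsilon^j\ast(C+D\delta)})$.

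Next I would move the character inside the $j$-sum. The key point is that $\chi((C+D\delta)\fb)=\chi((\epsilon^j\ast(C+D\delta))\fb)$ for every $j$: since $\frac{\epsilon^j\ast(C+D\delta)}{q}+\fb^{-1}=\frac{\epsilon^j(C+D\delta)}{q}+\fb^{-1}$ as cosets, the same ray-class identification used at the end of the proof of Proposition \ref{index} shows that $(\epsilon^j\ast(C+D\delta))\fb$ and $\epsilon^j(C+D\delta)\fb=(C+D\delta)\fb$ lie in one ray class modulo $q$. Once the character is attached to the index $\epsilon^j\ast(C+D\delta)$, the decomposition $F=\bigsqcup_{j=0}^{\lambda-1}\epsilon^j\tilde{F'}$ says that the map $((C,D),j)\mapsto \epsilon^j\ast(C+D\delta)$ is a bijection onto $F$, so the double sum over $\tilde{F'}$ and $j$ collapses to a single sum over $(C,D)\in F$ with summand $\chi((C+D\delta)\fb)\sum_{i=1}^m[B_1(x^i_{C+D\delta})B_1(y^i_{C+D\delta})+\frac{b_i}{2}B_2(x^i_{C+D\delta})]$.

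Finally I would enlarge the index set from $F$ to the full box $1\le C,D\le q$. For $(C,D)\notin F$ the ideal $(C+D\delta)\fb$ is not prime to $q$, so $\chi((C+D\delta)\fb)=0$ and these terms contribute nothing; moreover replacing the representatives $0\le C,D\le q-1$ by $1\le C,D\le q$ is harmless, because adding $q$ to $C$ or to $D$ translates $\frac{C+D\delta}{q}$ by an element of $\fb^{-1}=[1,\delta]$ and hence leaves the coset $\frac{C+D\delta}{q}+\fb^{-1}$—and therefore $\chi((C+D\delta)\fb)$, $x^i_{C+D\delta}$ and $y^i_{C+D\delta}$—unchanged. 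This yields exactly the asserted formula. I expect the only genuinely delicate step to be the ray-class invariance $\chi((C+D\delta)\fb)=\chi((\epsilon^j\ast(C+D\delta))\fb)$: one must check that the defect $\epsilon^j(C+D\delta)-(\epsilon^j\ast(C+D\delta))\in q\fb^{-1}$ really places the two ideals in the same class of $I_K(q)/P_K(q)$, rather than merely making them congruent as integral ideals, and that the total positivity required of the comparison generator is respected.
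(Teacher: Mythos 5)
Your proposal is correct and follows essentially the same route as the paper's own proof: evaluate Proposition \ref{index} at $s=0$, insert Lemma \ref{sum}, use the ray-class invariance $\chi((C+D\delta)\epsilon^j\fb)=\chi((\epsilon^j\ast(C+D\delta))\fb)$ to collapse the $\tilde{F'}\times\{0,\dots,\lambda-1\}$ sum onto $F$ via $F=\bigsqcup_j\epsilon^j\tilde F'$, and extend to the full box since $\chi$ vanishes off $F$. You are in fact slightly more careful than the paper on two minor points it leaves implicit (the harmless factor $(q^2N(\fb))^{-s}$ at $s=0$, and the shift of representatives from $0\le C,D\le q-1$ to $1\le C,D\le q$), and you correctly identify the ray-class comparison as the only delicate step.
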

\begin{proof}
From Proposition 2.3, we obtain
\begin{equation*}
\begin{split}
&L_K(0,\chi,\fb) \\
&= \sum_{(C,D)\in \tilde{F'}}\chi((C+D\delta)\fb)
 \sum_{
 \alpha\in(\frac{C+D\delta}{q}+\fb^{-1})^+/E_q^+}
 N(q\fb\alpha)^{-s}|_{s=0}.
\end{split}
\end{equation*}
 Lemma 2.8 implies that the above is equal to 
$$ \sum_{(C,D)\in \tilde{F'}}\chi((C+D\delta)\fb)
 \sum_{j=0}^{\lambda-1}\sum_{i=1}^{m} B_1(x_{\epsilon^j\ast(C+D\delta)}^i)B_1(y_{\epsilon^j\ast(C+D\delta)}^i)+\frac{b_i}{2}B_2(x_{\epsilon^j\ast(C+D\delta)}^i).$$
Since $(C+D\delta)\epsilon\fb=(C+D\delta)\fb$, 
the above is expressed as follows
$$ \sum_{(C,D)\in \tilde{F'}}
 \sum_{j=0}^{\lambda-1}\chi((C+D\delta)\epsilon^j\fb)\sum_{i=1}^{m} B_1(x_{\epsilon^j\ast(C+D\delta)}^i)B_1(y_{\epsilon^j\ast(C+D\delta)}^i)+\frac{b_i}{2}B_2(x_{\epsilon^j\ast(C+D\delta)}^i).$$
From \begin{equation}
F=\bigsqcup_{i=0}^{\lambda-1} \epsilon^i \tilde{F'},
\end{equation} we find that 
the above equals to 
$$\sum_{(C,D)\in F}\chi((C+D\delta)\fb)\sum_{i=1}^{m} B_1(x_{(C+D\delta)}^i)B_1(y_{(C+D\delta)}^i)+\frac{b_i}{2}B_2(x_{(C+D\delta)}^i).$$
If $((C+D\delta)\fb,q)\not=1$ then $\chi((C+D\delta)\fb)=0.$ Thus we complete the proof.
\end{proof}


\begin{rem}
It is important to note that the summation running over $C,D\in [1,q]$ is actually supported on 
$F$. This is justified by the twist of the mod $q$ Dirichlet character. Obviously, $F$ depends on
$\delta$ in $K$, but the twisted sum has invariant form of $\delta$ and $K$. This is a subtle
point  in the proof of the main theorem as we deal with family of the Hecke's $L$-values with respect
to a family $(K_n, \chi_n, \fb)$.
\end{rem}

\section{Proof of the main theorem}

In this section, we compute the special values of Hecke's L-function for a family of real quadratic fields. The computation is made using the expression of the L-value in the previous section. 
After the computation, it will be apparent that the linearity property comes sufficiently 
from the shape of the continued fractions in the family. 
This will complete the proof of Theorem \ref{linearity_criterion}.

This  gives a criterion that will recover several approaches of class number problems 
for some families of real quadratic fields.

Consider a family of real quadratic fields $K_n=\FQ(\sqrt{d_n})$, 
where $d_n$ is a positive square free integer. 
For a fixed Dirichlet character $\chi$ 
of modulus $q$, we associate a ray class character $\chi_n:=\chi\circ
N_{K_n/\FQ}$ for each $n$.
Let us fix an ideal $\fb_n$ of $K_n$ for each $n$. Then
we have a family of the Hecke's L-functions associated to $(K_n, \chi_n, \fb_n)$:
$$
L_{K_n}(s,\chi_n,\fb_n)=\sum_{\fa}\frac{\chi_n(\fa)}{N(\fa)^s}$$
where  $\fa$ runs over  integral ideals $\fa$ in the ray class represented by $\fb_n$. 




\subsection{Plan of the proof}

Assume that 
$$\fb_n^{-1}=[1,\delta(n)]$$
with  $\delta(n) > 2, 0<\delta(n)'<1$.
As discussed in Prop.\ref{pr}, $\delta(n)$ has a purely periodic minus continued fraction expansion: 
\begin{equation}
\begin{split}
\delta(n)=&((b_0(n),b_1(n),\cdots,b_{m(n)-1}(n)))\\= & b_0(n)-\cfrac{1}{b_1(n)-\cdots\cfrac{1}{b_{m(n)-1}(n)-\cfrac{1}{b_0(n)-\cdots}}}
\end{split}
\end{equation}
with $b_k(n) \ge 2$.

We extend the definition of $b_i(n)$  for all $i\in\FZ$ by requiring that
$b_{i+m(n)}(n)=b_i(n)$ for $i\in\FZ$.
Let $\delta_k(n)=((b_k(n),b_{k+1}(n),\cdots,b_{k+m(n)-1}(n)))$ and 
we define $\{A_k(n)\}_{k\in\FZ}$ by
$$
A_{-1}(n) = \delta(n), A_0(n)=1,\ldots, A_{k+1}(n)=A_{k}(n)/\delta_{k+1}(n).
$$

Then for fixed $C,D$ and $n$,  there is a unique $(x_{C+D\delta(n)}^i,y_{C+D\delta(n)}^i)$ such that
\begin{equation}
0<x_{C+D\delta(n)}^i\leq1,\,\,0\leq y_{C+D\delta(n)}^i<1,
\end{equation}
\begin{equation}
x_{C+D\delta(n)}^iA_{i-1}(n)+y_{C+D\delta(n)}^iA_i(n)\in\frac{C+D\delta(n)}{q}+\fb_n^{-1},
\end{equation}
for each $i\in\FZ$, as described in the previous section. 
This 
$(x^i_{C+D\delta(n)},y^i_{C+D\delta(n)})$ satisfies Yamamoto's recursive relation (\ref{Yam}) as follows:
\begin{equation}\label{yama}
x_{C+D\delta(n)}^{i+1}=\langle b_i(n)x_{C+D\delta(n)}^i+y_{C+D\delta(n)}^i\rangle,\,\,y_{C+D\delta(n)}^{i+1}=1-x_{C+D\delta(n)}^i.
\end{equation}





Now we recall a standard conversion formula of a plus continued fraction expansion to  minus continued fraction expansion:
\begin{lem}
Let $\delta-1$ be a purely periodic continued fraction:
$$
[[a_0,a_1,\cdots,a_{s-1}]].
$$
Then the minus continued fraction expansion of $\delta$ is
$$
((b_0, b_1, \cdots, b_{m-1})),
$$
where 
$$
b_i := \begin{cases} a_{2j}+2, & \text{for $i=S_j$}\\
2, & \text{otherwise}
\end{cases}
$$
for 
$$
S_j = \begin{cases} 0, &\text{for $j=0$}\\
	S_{j-1}+a_{2j-1}, & \text{for $j\geq1$}
	\end{cases}
$$
and the period 
$$
m = \begin{cases} a_1 + a_3+ a_5 \cdots + a_{s-1}=S_{\frac{s}{2}}, &\text{for even $s$}\\
                 a_0+a_1+a_2 \cdots+a_{s-1}=S_s, &\text{for odd $s$}
                 \end{cases}
$$
\end{lem}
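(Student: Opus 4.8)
The statement is the classical conversion between regular (plus) and Hirzebruch--Jung (minus) continued fractions, specialized to a purely periodic expansion. The plan is to encode both expansions by products of $2\times 2$ integer matrices and reduce the identity to an equality of M\"obius transformations. Write $P(c) = \left(\begin{smallmatrix} c & 1 \\ 1 & 0 \end{smallmatrix}\right)$ and $N(b) = \left(\begin{smallmatrix} b & -1 \\ 1 & 0 \end{smallmatrix}\right)$. A purely periodic plus CF $x = [[a_0,\ldots,a_{s-1}]]$ is the attracting fixed point of the M\"obius map attached to $M_+ := P(a_0)\cdots P(a_{s-1})$, and a purely periodic minus CF $((b_0,\ldots,b_{m-1}))$ with all $b_i\ge 2$ is the unique attracting fixed point $>1$ of the map attached to $N(b_0)\cdots N(b_{m-1})$. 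Since $\delta = (\delta-1)+1$, conjugating $M_+$ (which fixes $\delta-1$) by the translation $U = \left(\begin{smallmatrix} 1 & 1 \\ 0 & 1 \end{smallmatrix}\right)$ produces a matrix fixing $\delta$; it then suffices to identify this conjugated matrix, up to a scalar, with $\prod_i N(b_i)$ for the $b_i$ in the statement, since the $b_i\ge2$ then force $\delta$ to be exactly the asserted purely periodic minus CF.

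The computational engine is a block identity converting each consecutive plus-pair $P(a_{2j})P(a_{2j+1})$ into $N(a_{2j}+2)\,N(2)^{a_{2j+1}-1}$ up to an explicit unimodular correction that the neighbouring block absorbs; here $N(2)^{k} = \left(\begin{smallmatrix} k+1 & -k \\ k & -(k-1) \end{smallmatrix}\right)$, which I would verify by direct multiplication. Applied to the purely periodic expansion of $\delta-1$, this telescopes into a minus CF whose special-position entries are $a_{2j}+2$ and whose other entries are $2$, except that the leading entry emerges as $a_0+1$; the conjugation by $U$ supplies the missing $+1$, so that after the shift to $\delta$ all special entries are uniformly $a_{2j}+2$. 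This is precisely the structural reason the lemma is phrased in terms of $\delta-1$. Each pair $(a_{2j},a_{2j+1})$ contributes $1+(a_{2j+1}-1)=a_{2j+1}$ consecutive minus-quotients, and since $a_{2j}+2\ge 3$ every entry is $\ge 2$, so the output is a genuine minus CF.

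Finally the period, where the parity of $s$ enters. For even $s$ the $s$ plus-quotients split into $s/2$ pairs inside a single period, the assembly closes after $S_{s/2}=a_1+a_3+\cdots+a_{s-1}$ minus-quotients, and $m=S_{s/2}$. For odd $s$ one plus-period has an odd number of quotients, so the pairing $(a_{2j},a_{2j+1})$ straddles the period boundary and the parity of which quotient receives the ``$+2$'' flips after each plus-period; it realigns only after two plus-periods, which share the fixed point $\delta-1$ so that no information is lost. Since the special positions carry $a_{2j}$ with index taken modulo $s$ and $\gcd(2,s)=1$, the map $j\mapsto 2j \bmod s$ permutes $\FZ/s$, so over $j=0,\ldots,s-1$ each residue occurs once and $m=S_s = a_1+a_3+\cdots+a_{2s-1}=a_0+a_1+\cdots+a_{s-1}$. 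I expect this odd-$s$ realignment to be the main obstacle: one must check that the two-period assembly genuinely returns a purely periodic minus CF with the stated entries and that the unimodular corrections telescope correctly around the doubled period. The even case and the block identity itself are routine.
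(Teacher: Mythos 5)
Your proposal is correct, but it takes a genuinely different route from the paper: the paper does not prove the conversion at all, it simply cites Zagier (pp.\ 177--178 of the Kronecker limit formula paper) for the plus-to-minus dictionary and only writes out the odd-$s$ bookkeeping, namely that $S_s=\sum_{i=1}^{s}a_{2i-1}=\sum_{i=0}^{s-1}a_i$ because $j\mapsto 2j-1 \bmod s$ permutes the residues when $s$ is odd --- exactly the observation in your last paragraph. Your transfer-matrix argument is a legitimate self-contained replacement, and it is in fact cleaner than you anticipate: with $T=\left(\begin{smallmatrix}1&1\\0&1\end{smallmatrix}\right)$ one has the \emph{exact} identity $T\,P(a)P(a')\,T^{-1}=N(a+2)\,N(2)^{a'-1}$ (direct multiplication, using your formula for $N(2)^k$), so the blocks concatenate with no leftover unimodular correction to be absorbed by neighbours, the leading entry comes out as $a_0+2$ immediately, and the odd-$s$ ``realignment'' you flag as the main obstacle is handled for free by applying the even case to $M_+^2$, whose fixed point is still $\delta-1$. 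The one step you pass over too quickly is the identification at the end: knowing that $\delta$ is a fixed point of $\prod_i N(b_i)$ with all $b_i\ge 2$ and $b_0=a_0+2\ge 3$ does not by itself yield the purely periodic minus expansion; you must also use that $\delta-1$ is reduced for the plus algorithm ($\delta-1>1$, $-1<\delta'-1<0$), hence $\delta>2$ and $0<\delta'<1$, which is precisely the reducedness condition making the minus-CF algorithm applied to $\delta$ purely periodic and therefore equal to the expansion read off from the matrix product. What the two approaches buy: the paper's citation is shorter and defers all analysis to Zagier's reduction theory, while your version is explicit, verifiable by finite matrix computations, and makes transparent why the statement is phrased for $\delta-1$ rather than $\delta$.
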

\begin{proof} (See page 177, 178 of \cite{Zagier}). Actually if $s$ is an odd integer, the period $m$ is 
$$\sum_{i=1}^{s}a_{2i-1}=a_1+a_3+\cdots+a_{2s-1}=S_s.$$
Since $a_i$ has period $s$, we find that
$$a_1+1_3+\cdots+a_{2s-1}=a_0+a_1+a_2\cdots+a_{s-1}=\sum_{i=0}^{s-1}a_i.$$

\end{proof}

For the family of 
$\delta(n)\in K$, we assumed 
that 
$$
\delta(n)-1 = [[a_0(n),a_1(n), a_2(n),\ldots, a_{s-1}(n)]],
$$
has the same period for every $n$. 

Then $\delta(n)$ has purely periodic minus continued fraction expansion
$$
\delta(n) = ((b_0(n), b_1(n),\cdots,b_{m(n)-1}(n)))
$$
with $b_i(n)$, $S_j(n)$ and $m(n)$ defined by the same manner as in the previous lemma.

%




One should be aware that $m(n)$ vary with $n$, while the period of positive continued fraction $s$ is fixed. 
%

From Proposition \ref{hecke0} and recursive relation (15) of $(x_{C+D\delta(n)}^i,y_{C+D\delta(n)}^i)$, we have
\begin{equation}\begin{split}
&L_{K_n}(0,\chi_n,\fb_n) = \\
	&\sum_{1\le C,D\le q} \chi_n((C+D\delta(n))\fb_n) 
	\sum_{i=1}^{m(n)}\big(B_1(x^i_{C+D\delta(n)})B_1(y^i_{C+D\delta(n)}) + \frac{b_i(n)}2 B_2(x^i_{C+D\delta(n)})\big).
\end{split}\end{equation}

To check the linear behavior, it suffices to show that
\begin{equation}\label{sp}
\sum_{i=1}^{m(n)}\big(B_1(x_{C+D\delta(n)}^{i})B_1(y^i_{C+D\delta(n)})+\frac{b_i(n)}2B_2(x_{C+D\delta(n)}^{i})\big)
\end{equation}
is linear in $k$ with the coefficients determined only by $r$.

Because  $b_i(n)=2$ if  $i \ne S_j(n)$  for some $j$, 
we can divide the above into two parts: 
\begin{equation}
\begin{split}
&\sum_{l=1}^{s\mu(s)}\big(-B_1(x^{S_l(n)}_{C+D\delta(n)})B_1(x^{S_l(n)-1}_{C+D\delta(n)})+\frac{a_{2l}(n)+2}{2}B_2(x^{S_l(n)}_{C+D\delta(n)})\big)\\
&+\sum_{l=0}^{s\mu(s)-1}\sum_{i=S_l(n)+1}^{\substack{S_{l+1}(n)-1}}F(x^i_{C+D\delta(n)},x^{i-1}_{C+D\delta(n)})
\end{split}
\end{equation}
where $\mu(s)=\frac{1}{2}$ or $1$ for $s$ even or odd, respectively, and $F(x,y):=-B_1(x)B_1(y)+B_2(x)$.


If $C,D$ are fixed and there is no danger of misunderstand, $x_i(n)$  will simply mean $x_{C+D\delta(n)}^{i}$. 

Below is the behavior of $x_i(n)$, when $n$ varies.
The proof will be given later.\\

\fbox{\begin{minipage}{\textwidth}
1. $\{x_i(n)\}_{S_j(n)\leq i\leq S_{j+1}(n)}$ is an arithmetic progression mod $\FZ$ with 
common difference $\left<x_{S_{j}(n)+1}(n)-x_{S_{j}(n)}(n)\right>$.\\
2.  $\{x_i(n)\}_{S_j(n)\leq i\leq S_{j+1}(n)}$ has period $q$.\\
3.  $x_{S_{j}(n)}(n)$, $x_{S_{j}(n)-1}(n)$ and $x_{S_{j}(n)+1}(n)$ are invariant as $k$ varies for $n=qk+r$. 
\end{minipage}}\\

In short, $\{ x_i(n)\}$ is a `piecewise arithmetic progression'. 

As we have constrained that $a_i(n)=\alpha_i n+ \beta_i$,
 $\langle a_i(n)\rangle_q$ is independent of $k$ for $n=qk+r$ but depends only on $i$ and $r$.  

Define $\gamma_i(r)$ as follows:
\begin{equation}
\gamma_{i}(r):=\langle a_i(n)\rangle_q
\end{equation}
Then actually $\gamma_i(r)$ is $\left<a_i(r)\right>_q$.
Since $\{F(x_i(n),x_{i-1}(n))\}_{S_j(n)+1\leq i\leq S_{j+1}(n)-1}$ has period $q$ from 2 above, 
we obtain
\begin{equation}
\begin{split}
\sum_{i=S_l(n)+1}^{\substack{S_{l+1}(n)-1}}&F(x_i(n),x_{i-1}(n))\\
=&\sum_{i=S_l(n)+1}^{\substack{S_{l}(n)+\gamma_{2l+1}(r)-1}}F(x_i(n),x_{i-1}(n))+\kappa_{2l+1}(n)\sum_{i=S_l(n)+1}^{\substack{S_{l}(n)+q}}F(x_i(n),x_{i-1}(n))
\end{split}\end{equation}
where 
$a_{i}(n) = \kappa_i(n)q + \gamma_{i}(r)$ for an
integer $\kappa_i(n)$. 
Written precisely, 
\begin{equation}\label{kappa1}
\kappa_i(n) =\frac{a_{i}(n)-\gamma_{i}(r)}q.
\end{equation}
Since  $$\alpha_i r+\beta_i =q\tau_i(r)+\gamma_i(r)$$ for some integer $\tau_i(r)$,
we can write for $n=qk+r$
\begin{equation}\label{kappa2}
\kappa_i(n)=k\alpha_i+\tau_{i}(r)
\end{equation}

Using 3, $x_{S_l(n)}(n)$ and $x_{S_l(n)+1}(n)$ are sufficiently determined by the residue $r$ of $n$ by $q$. 
\textit{A priori} the summations $\sum_{i=S_l(n)+1}^{\substack{S_{l}(n)+\gamma_{2l+1}(r)-1}}F(x_i(n),x_{i-1}(n))$ and $\sum_{i=S_l(n)+1}^{\substack{S_{l}(n)+q}}F(x_i(n),x_{i-1}(n))$ are completely determined by 
$x_{S_l(n)}(n)$ and $x_{S_l(n)+1}(n)$ and remain unchanged while $k$ varies. 


Thus we conclude first that \\

\fbox{\begin{minipage}{\textwidth}
I. For $n=qk+r$,
$\sum_{i=S_l(n)+1}^{\substack{S_{l+1}(n)-1}}F(x_i(n),x_{i-1}(n))$ is linear function of $k$. 
\end{minipage}}\\

Using (\ref{kappa1}) and (\ref{kappa2}), we have
\begin{equation}
\begin{split}
&-B_1(x_{S_l(n)}(n))B_1(x_{S_l(n)-1}(n))+\frac{a_{2l}(n)+2}{2}B_2(x_{S_l(n)}(n))\\&=-B_1(x_{S_l(n)}(n))B_1(x_{S_l(n)-1}(n))+\frac{\alpha_{2l}qk+\tau_{2l}(r)q+\gamma_{2l}(r)+2}{2}B_2(x_{S_l(n)}(n))
\end{split}
\end{equation} 

Again after 3 we conclude that\\

\fbox{\begin{minipage}{\textwidth}
II. For $n=qk+r$, $-B_1(x_{S_l(n)}(n))B_1(x_{S_l(n)-1}(n))+\frac{a_{2l}(n)+2}{2}B_2(x_{S_l(n)}(n))$ is linear function on $k$.
\end{minipage}}\\

Additionally, we have

\fbox{\begin{minipage}{\textwidth}
III. $s$ and $\mu(s)$ is independent of $n$. 
\end{minipage}}\\

Altogether I,II and III imply the linearity of $\sum_{i=1}^{m(n)}-B_1(x_{i}(n))B_1(x_{i-1}(n))+\frac{b_i(n)}2B_2(x_{i}(n))$ in $k$ and the coefficients are function in $r$ for fixed $C,D$.
%

In sequal, 
we will clarify the properties 1, 2, 3 of $\{x_i(n)\}$.  
Also we give precise description $\sum_{i=1}^{m(n)}-B_1(x_{i}(n))B_1(x_{i-1}(n))+\frac{b_i(n)}2B_2(x_{i}(n))$ that will finish the proof of Theorem \ref{linearity_criterion}.

\subsection{Periodicity and invariance}
In this section, we will prove the properties 1, 2, 3 of $\{x_i(n)\}$ in the previous section. 
\begin{prop}
For $j\geq0,$ $\{x_i(n)\}_{S_j(n)\leq i\leq S_{j+1}(n)}$ is an arithmetic progression mod $\FZ$ with common difference $\langle x_{S_j(n)+1}(n)-x_{S_j(n)}(n)\rangle$.
\end{prop}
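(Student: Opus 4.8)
The plan is to exploit Yamamoto's recursive relation (\ref{yama}) on the stretches where $b_i(n)=2$. Recall that by the conversion lemma, the indices $S_j(n) < i < S_{j+1}(n)$ are exactly those for which $b_i(n)=2$, so between two consecutive ``special'' indices the recursion reads
$$
x_{i+1}(n) = \langle 2x_i(n) + y_i(n)\rangle, \qquad y_{i+1}(n) = 1 - x_i(n).
$$
The first step is to substitute the second equation into the first one index down: since $y_i(n) = 1 - x_{i-1}(n)$, we get
$$
x_{i+1}(n) = \langle 2x_i(n) + 1 - x_{i-1}(n)\rangle \equiv 2x_i(n) - x_{i-1}(n) \pmod{\FZ},
$$
valid whenever $b_i(n)=2$, that is for $S_j(n) < i < S_{j+1}(n)$ (one must check the relation also holds at the endpoint $i = S_j(n)+1$, which only requires $b_{S_j(n)+1}(n)=2$, i.e. that $S_j(n)+1$ is not itself special).

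The second step is to read this as the defining three-term relation of an arithmetic progression modulo $\FZ$. Setting $d_i(n) := \langle x_{i+1}(n) - x_i(n)\rangle$ for the consecutive differences, the congruence $x_{i+1}(n) - x_i(n) \equiv x_i(n) - x_{i-1}(n) \pmod{\FZ}$ says precisely that all these differences coincide in $\FR/\FZ$, hence $d_i(n)$ is constant across $S_j(n) \le i < S_{j+1}(n)$ and equals its initial value $\langle x_{S_j(n)+1}(n) - x_{S_j(n)}(n)\rangle$. I would organize this as a short induction on $i$ from $S_j(n)$ upward: the base case records the common difference, and the inductive step applies the displayed two-term-to-three-term reduction to propagate it.

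The one point demanding care is the bookkeeping at the left endpoint $i = S_j(n)$. The recursion producing $x_{S_j(n)+1}(n)$ uses $b_{S_j(n)}(n) = a_{2j}(n)+2 \ne 2$, so the clean $b_i=2$ form does not apply at that single step; this is exactly why the common difference is \emph{defined} to be $\langle x_{S_j(n)+1}(n) - x_{S_j(n)}(n)\rangle$ rather than derived from the simplified recursion. Thus the arithmetic progression starts at $x_{S_j(n)}(n)$ with its first gap taken as given, and the simplified recursion only governs the interior and guarantees the gap never changes thereafter up to $i = S_{j+1}(n)$. I expect this endpoint accounting to be the only genuine obstacle; the interior is a one-line consequence of Yamamoto's relation, and everything takes place mod $\FZ$ so no carrying or $\langle\cdot\rangle$ subtleties propagate beyond the single application of the reduction.
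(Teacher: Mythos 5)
Your argument is correct and is essentially the paper's own proof: both reduce Yamamoto's recursion to $x_{i+1}(n)\equiv 2x_i(n)-x_{i-1}(n)\pmod{\FZ}$ on the interior indices where $b_i(n)=2$ and conclude that consecutive differences agree mod $\FZ$, hence equal the initial gap $\langle x_{S_j(n)+1}(n)-x_{S_j(n)}(n)\rangle$. Your extra remark about the left endpoint is a harmless clarification of the same bookkeeping the paper handles by restricting the difference identity to $S_j(n)+1\leq i\leq S_{j+1}(n)-1$.
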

\begin{proof}
Since $b_i(n)=2$ for $S_j(n)+1\leq i\leq S_{j+1}(n)-1$, we have that
$$x_{i+1}(n)=\langle2x_{i}(n)-x_{i-1}(n)\rangle.$$
It implies that for $S_j(n)+1\leq i\leq S_{j+1}(n)-1$,
$$\langle x_{i+1}(n)-x_{i}(n)\rangle=\langle\langle2x_{i}(n)-x_{i-1}(n)\rangle-x_i(n)\rangle=\langle x_{i}(n)-x_{i-1}(n)\rangle.$$
\end{proof}

\begin{lem}\label{lem3.3}For $i\geq -1,$ we have
\begin{enumerate}
\item $qx_i(n)\in \FZ$.
\item $0<x_i(n)\leq1.$
\end{enumerate}
\end{lem}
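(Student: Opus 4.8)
The plan is to treat the two assertions separately, since assertion (2) is essentially the normalization built into the definition of $x_i(n)$, while assertion (1) carries the actual content. For (2): by construction $(x_i(n),y_i(n)) = (x^i_{C+D\delta(n)},y^i_{C+D\delta(n)})$ is the unique pair in $(0,1]\times[0,1)$ satisfying the membership relation recalled at the start of Section 3 (the same normalization fixed in Section 2). In particular $0<x_i(n)\le 1$ holds for every index, and a fortiori for $i\ge -1$, so nothing further is needed beyond pointing to the normalization.

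For (1) I would first unwind the defining membership as a genuine statement in the rank-two lattice $\iota(\fb_n^{-1})$. By definition there is a unique $\gamma \in \frac{C+D\delta(n)}{q}+\fb_n^{-1}$ with $\iota(\gamma) = x_i(n)P_{i-1}+y_i(n)P_i$, where $P_{i-1},P_i\in\iota(\fb_n^{-1})$ are the consecutive boundary vertices whose first coordinates are $A_{i-1}(n),A_i(n)$ (note $P_{-1}=\iota(\delta(n))$, $P_0=\iota(1)$). Writing $\gamma = \frac{C+D\delta(n)}{q}+\beta$ with $\beta\in\fb_n^{-1}$, I multiply through by $q$ to get $q\gamma = (C+D\delta(n)) + q\beta$. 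Here $C+D\delta(n)\in\FZ+\delta(n)\FZ = \fb_n^{-1}$ because $\fb_n^{-1}=[1,\delta(n)]$ and $C,D\in\FZ$, while $q\beta\in\fb_n^{-1}$ trivially; hence $q\gamma\in\fb_n^{-1}$. Applying $\iota$ then gives $qx_i(n)P_{i-1}+qy_i(n)P_i = \iota(q\gamma)\in\iota(\fb_n^{-1})$. Since $\{P_{i-1},P_i\}$ is a $\FZ$-basis of $\iota(\fb_n^{-1})$ (as in the discussion preceding \eqref{recu} and in Proposition \ref{pr}), every lattice point has integer coordinates in this basis, so $qx_i(n)\in\FZ$ — and likewise $qy_i(n)\in\FZ$ — which is precisely (1). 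Combined with (2), this in fact pins $qx_i(n)$ to the set $\{1,2,\ldots,q\}$, the form in which it will be used for the periodicity statements.

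An alternative route is a clean induction on $i$ using Yamamoto's recursion \eqref{yama}: if $qx_i(n),qy_i(n)\in\FZ$ then, because $b_i(n)\in\FZ$, we have $q(b_i(n)x_i(n)+y_i(n))\in\FZ$, and since $\langle\cdot\rangle$ merely subtracts an integer, $qx_{i+1}(n)=q\langle b_i(n)x_i(n)+y_i(n)\rangle\in\FZ$ and $qy_{i+1}(n)=q(1-x_i(n))\in\FZ$; integrality then propagates once the base values $qx_{-1}(n),qy_{-1}(n),qx_0(n),qy_0(n)$ are verified, which again reduces to the lattice membership argument at the starting indices. I would present the direct lattice argument as the main proof, since it handles all $i\ge -1$ uniformly without base-case computations.

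The one place where a little care is needed — and the step I expect to be the only genuine subtlety — is the identification between the abbreviated first-coordinate notation $x_i(n)A_{i-1}(n)+y_i(n)A_i(n)\in\frac{C+D\delta(n)}{q}+\fb_n^{-1}$ used throughout Section 3 and the honest two-dimensional membership $x_i(n)P_{i-1}+y_i(n)P_i\in\iota\big(\frac{C+D\delta(n)}{q}+\fb_n^{-1}\big)$ exploited above. Integrality of $qx_i(n)$ cannot be extracted from the single real equation in first coordinates alone (one equation, two unknowns); it is forced only because the full vector $\iota(q\gamma)$ sits in the rank-two lattice with $\FZ$-basis $\{P_{i-1},P_i\}$. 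Making this identification explicit, while routine under the conventions fixed in the Notations section and in Section 2, is the step I would write out most carefully.
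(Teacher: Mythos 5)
Your proposal is correct, but for assertion (1) your main argument takes a genuinely different route from the paper. The paper's proof is precisely the induction you relegate to an ``alternative'': from $A_0(n)=1$ and $A_{-1}(n)=\delta(n)$ it reads off the base values $x_0(n)=\langle D/q\rangle$ and $x_{-1}(n)=1-C/q$, and then propagates both $qx_i(n)\in\FZ$ and $0<x_i(n)\le 1$ through Yamamoto's recursion \eqref{yama} using $b_i(n)\in\FZ$ and the fact that $\langle\cdot\rangle$ subtracts an integer and lands in $(0,1]$. Your primary argument instead clears denominators in the coset membership: $q\gamma=(C+D\delta(n))+q\beta\in\fb_n^{-1}$, so its coordinates in the $\FZ$-basis $\{P_{i-1},P_i\}$ of $\iota(\fb_n^{-1})$ are integers. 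Both arguments are valid. Yours is uniform in $i$, needs no base-case computation, and yields $qy_i(n)\in\FZ$ as well; you are also right to flag that it only works once the first-coordinate relation $x_i(n)A_{i-1}(n)+y_i(n)A_i(n)\in\frac{C+D\delta(n)}{q}+\fb_n^{-1}$ is read as the two-dimensional membership of \eqref{set} --- a single real equation would not force integrality --- and that identification is indeed how the quantities are defined in Section 2. The paper's induction has the side benefit that the explicit base values $x_0(n)$ and $x_{-1}(n)$ are exactly what is reused in the proof of Proposition \ref{xi}, which is presumably why the authors organize the proof that way.
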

\begin{proof}
Since $A_0(n)=1$, $A_{-1}(n)=\delta(n),$ from (13),(14) and (15) we find that 
$$x_0(n)=\bigg\langle\frac{D}{q}\bigg\rangle, \,\, x_{-1}(n)=1-\frac{C}{q}.$$
We also note that $b_i(n)\in \FZ$ for any $i\geq0$. Thus (15) implies above lemma.
\end{proof}

\begin{prop}
For $j\geq 0$ and $a_{2j+1}(n)\geq q$, $\{x_i(n)\}_{S_j(n)\leq i\leq S_{j+1}(n)}$ has period $q$. Explicitly we have
$$x_{S_j(n)+q+i}(n)=x_{S_j(n)+i}(n)\,\,\text{for}\,\,
0\leq i \leq a_{2j+1}(n)-q.$$
\end{prop}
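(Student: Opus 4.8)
The plan is to combine the arithmetic-progression structure just established with the integrality coming from Lemma~\ref{lem3.3}. By the preceding proposition, on the block $S_j(n)\le i\le S_{j+1}(n)$ the sequence $\{x_i(n)\}$ is an arithmetic progression modulo $\FZ$ with fixed common difference $d:=\langle x_{S_j(n)+1}(n)-x_{S_j(n)}(n)\rangle$; concretely, $x_{S_j(n)+i}(n)\equiv x_{S_j(n)}(n)+i\,d \pmod{\FZ}$ for $0\le i\le S_{j+1}(n)-S_j(n)=a_{2j+1}(n)$, where the last equality is just $S_{j+1}(n)=S_j(n)+a_{2j+1}(n)$ from the conversion lemma.

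First I would pin down that $d$ is a multiple of $1/q$. Lemma~\ref{lem3.3}(1) gives $qx_i(n)\in\FZ$ for every $i$, so in particular $qx_{S_j(n)+1}(n)$ and $qx_{S_j(n)}(n)$ are integers, whence $qd\in\FZ$. This single arithmetic fact is the whole engine of the proof: running the progression forward by exactly $q$ steps adds $qd$ to the value modulo $\FZ$, and since $qd\in\FZ$ this leaves the residue class unchanged. Thus, for any $i$ for which both indices $S_j(n)+i$ and $S_j(n)+i+q$ remain in the block, one gets $x_{S_j(n)+i+q}(n)\equiv x_{S_j(n)+i}(n)\pmod{\FZ}$.

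Finally I would lift this congruence to an equality: both $x_{S_j(n)+i+q}(n)$ and $x_{S_j(n)+i}(n)$ lie in $(0,1]$ by Lemma~\ref{lem3.3}(2), and $(0,1]$ is a complete set of representatives for $\FR/\FZ$, so the congruence forces genuine equality $x_{S_j(n)+q+i}(n)=x_{S_j(n)+i}(n)$.

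The only point that needs care — and the reason the statement is restricted to $0\le i\le a_{2j+1}(n)-q$ rather than asserted for all $i$ — is keeping both indices inside the block $[S_j(n),S_{j+1}(n)]$ where the arithmetic-progression formula is valid; stepping past $S_{j+1}(n)$ would cross an index with $b_i(n)=a_{2j}(n)+2\ne 2$, at which the common difference changes. The requirement $S_j(n)+i+q\le S_{j+1}(n)=S_j(n)+a_{2j+1}(n)$ is exactly $i\le a_{2j+1}(n)-q$, and the hypothesis $a_{2j+1}(n)\ge q$ is precisely what guarantees this range is nonempty. So I expect the substantive content of the argument to be trivial once $qd\in\FZ$ is noted; the main thing to get right is this bookkeeping on the index range.
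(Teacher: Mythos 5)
Your proposal is correct and follows essentially the same route as the paper's proof: both use the arithmetic-progression structure modulo $\FZ$ on the block, invoke Lemma \ref{lem3.3} to get $q\langle x_{S_j(n)+i}(n)-x_{S_j(n)+i-1}(n)\rangle\in\FZ$ so that advancing $q$ steps preserves the residue class, and then use $x_i(n)\in(0,1]$ to upgrade the congruence to equality. The index-range bookkeeping you flag is exactly the reason for the stated restriction $0\le i\le a_{2j+1}(n)-q$.
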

\begin{proof} Note that $\{x_i(n) \,\,\text{mod} \,1\}_{S_j(n)\leq i\leq S_{j+1}(n)}$  is an arithemetic progression. Thus we have
$$x_{S_j(n)+q+i}(n)=\langle x_{S_j(n)+i}(n)+q\langle x_{S_j(n)+i}(n)-x_{S_j(n)+i-1}(n)\rangle\rangle,$$
for $0\leq  i\leq a_{2j+1}(n)-q.$ From Lemma \ref{lem3.3}, we find that 
$$q\langle x_{S_j(n)+i}(n)-x_{S_j(n)+i-1}(n)\rangle\in\FZ.$$
Thus
$$\langle x_{S_j(n)+i}(n)+q\langle x_{S_j(n)+i}(n)-x_{S_j(n)+i-1}(n)\rangle\rangle=\langle x_{S_j(n)+i}(n)\rangle.$$
Since $0<x_{S_j(n)+i}(n)\leq1,$ we finally have that 
$$\langle x_{S_j(n)+i}(n)\rangle=x_{S_j(n)+i}(n).$$
\end{proof}

For $0\leq r\leq q-1,$ we define
$$\Gamma_j(r):=\begin{cases}0, & \text{for $ j=0$}\\ \Gamma_j(r)+\gamma_{2j-1}(r), &\text{for $j \geq 1$}\end{cases},$$

where $\gamma_i(r)$ is defined as in (19). 
For $i\geq 0$, we put
$$ 
c_i(r)= \begin{cases}
\gamma_{2j}(r) + 2,  & \text{for $i=\Gamma_j(r)$} \\
2, & \text{otherwise}
\end{cases}
$$


Consider a sequence $\{\nu_{CD}^i(r)\}_{i\geq -1}$ with the  initial value and the recursive relation as follows:
\begin{equation*}
\nu_{CD}^{-1}(r)=\frac{q-C}{q},\quad \nu_{CD}^0(r)= \langle \frac{D}{q}\rangle 
\end{equation*}
and
$$\nu_{CD}^{i+1}(r)=\langle c_i(r)\nu_{CD}^i(r)-\nu_{CD}^{i-1}(r)\rangle. $$

If $C,D$ are fixed and clear from the context, we omit the subscript and abbreviate $\nu_{CD}^i(r)$ to $\nu_i(r).$ 

\begin{prop}\label{xi} With the notations above, for $j\geq0$ and $n=qk+r$, we have
$$x_{S_j(n)+i}(n)=\nu_{\Gamma_j(r)+i}(r)\quad\text{for}\,\,0\leq i\leq \gamma_{2j+1}(r)$$
\end{prop}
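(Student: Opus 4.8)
The plan is to argue by induction on the block index $j$, exploiting the fact that both $\{x_{S_j(n)+i}(n)\}_i$ and $\{\nu_{\Gamma_j(r)+i}(r)\}_i$ satisfy the \emph{same} recursion once one works modulo $\FZ$. Combining the two halves of Yamamoto's relation (\ref{yama}), namely $y^{i+1}=1-x^i$, the sequence obeys
$$
x_{i+1}(n)=\langle b_i(n)\,x_i(n)-x_{i-1}(n)\rangle,
$$
while by construction $\nu_i(r)$ obeys $\nu_{i+1}(r)=\langle c_i(r)\,\nu_i(r)-\nu_{i-1}(r)\rangle$. The whole argument rests on a reduction-mod-$q$ principle: by Lemma \ref{lem3.3} we have $q\,x_i(n)\in\FZ$, so for any integer $\beta\equiv b_i(n)\pmod q$ one gets $(b_i(n)-\beta)x_i(n)\in\FZ$, whence $\langle b_i(n)x_i(n)-x_{i-1}(n)\rangle=\langle\beta x_i(n)-x_{i-1}(n)\rangle$. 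Thus the recursion for $x_i(n)$ modulo $\FZ$ sees $b_i(n)$ only through its residue mod $q$. Since all terms lie in $(0,1]$, agreement modulo $\FZ$ upgrades to genuine equality; the same bookkeeping shows $\nu_i(r)\in\frac1q\FZ\cap(0,1]$.

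The coefficients then match where it matters. Because $a_i(n)\equiv\gamma_i(r)\pmod q$ by the definition of $\gamma_i$, the special coefficient $b_{S_j(n)}(n)=a_{2j}(n)+2$ agrees modulo $q$ with $c_{\Gamma_j(r)}(r)=\gamma_{2j}(r)+2$, while at every interior offset $1\le i\le\gamma_{2j+1}(r)-1$ both $b_{S_j(n)+i}(n)$ and $c_{\Gamma_j(r)+i}(r)$ equal $2$. Hence, to compute $x_{S_j(n)+i}(n)$ for $0\le i\le\gamma_{2j+1}(r)$ one invokes exactly the coefficients governing $\nu_{\Gamma_j(r)+i}(r)$; crucially the coefficient at the right endpoint offset $\gamma_{2j+1}(r)$ is never used, so any discrepancy there with $c_{\Gamma_{j+1}(r)}(r)$ is harmless. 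For the base case $j=0$ we have $S_0(n)=\Gamma_0(r)=0$, and the two seeds match by Lemma \ref{lem3.3}: $x_0(n)=\langle D/q\rangle=\nu_0(r)$ and $x_{-1}(n)=(q-C)/q=\nu_{-1}(r)$; running the common recursion yields $x_i(n)=\nu_i(r)$ for $0\le i\le\gamma_1(r)$.

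The delicate part, which I expect to be the main obstacle, is crossing a block boundary in the inductive step. Assuming the statement for $j-1$, to start the recursion on block $j$ I must match the two seeds $x_{S_j(n)}(n)$ and $x_{S_j(n)-1}(n)$ with $\nu_{\Gamma_j(r)}(r)$ and $\nu_{\Gamma_j(r)-1}(r)$. Here I would invoke the period-$q$ Proposition above: writing $a_{2j-1}(n)=\kappa_{2j-1}(n)q+\gamma_{2j-1}(r)$ with $\gamma_{2j-1}(r)=\gamma_{2(j-1)+1}(r)$, periodicity folds the end of block $j-1$ so that $x_{S_{j-1}(n)+a_{2j-1}(n)}(n)=x_{S_{j-1}(n)+\gamma_{2j-1}(r)}(n)$, and likewise for the index one smaller. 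The induction hypothesis for block $j-1$, which covers offsets up to $\gamma_{2j-1}(r)$, then identifies these with $\nu_{\Gamma_{j-1}(r)+\gamma_{2j-1}(r)}(r)=\nu_{\Gamma_j(r)}(r)$ and $\nu_{\Gamma_j(r)-1}(r)$, using $\Gamma_j(r)=\Gamma_{j-1}(r)+\gamma_{2j-1}(r)$ and $S_j(n)=S_{j-1}(n)+a_{2j-1}(n)$. With both seeds in place, the mod-$q$ recursion argument applies verbatim to block $j$.

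The obstacle is precisely this boundary bookkeeping: ensuring that the folding by periodicity lands inside the range covered by the inductive hypothesis, and correctly treating the degenerate case $\kappa_{2j-1}(n)=0$, where $a_{2j-1}(n)=\gamma_{2j-1}(r)$ so that $S_j(n)=S_{j-1}(n)+\gamma_{2j-1}(r)$ and no folding is needed. Everything else reduces to the single mechanism that the $\frac1q\FZ$-valuedness of the $x_i(n)$ collapses the dependence on the coefficients $b_i(n)$ and $a_i(n)$ to their residues mod $q$.
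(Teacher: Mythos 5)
Your proposal is correct and follows essentially the same route as the paper's proof: induction on the block index $j$, the identical base case with seeds $x_0(n)=\langle D/q\rangle$ and $x_{-1}(n)=1-C/q$, the period-$q$ proposition to fold the end of block $j-1$ onto the range covered by the inductive hypothesis, and the observation that $q x_i(n)\in\FZ$ together with $a_i(n)\equiv\gamma_i(r)\pmod q$ makes the recursions for $x$ and $\nu$ coincide. Your explicit statement of the reduction-mod-$q$ principle and your attention to the degenerate case $\kappa_{2j-1}(n)=0$ are welcome clarifications of points the paper leaves implicit, but they do not change the argument.
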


\begin{proof}

We use induction on $j$.

When $j=0.$ $S_0(n)= \Gamma_0(r) = 0$.
We need to show 
$x_i(n) = \nu_i(r)$ for $i\in [0,\gamma_1(r)]$. As we have seen in the proof of lemma \ref{lem3.3},
$$
x_0(n) = \langle\frac{D}{q}\rangle = \nu_0(r) ,\quad  x_{-1}(n) = 1-\frac{C}q=\nu_{-1}(r).
$$
Since $a_0(n)-\gamma_0(r)\in q\FZ$, using (\ref{yama}) and the recursive relation of $\nu_i(r)$, one can
easily check that
$$
x_1(n) = \langle (a_0(n)+2) \langle \frac{D}q\rangle + \frac{C}q \rangle = \langle (\gamma_0(r)+2) \nu_0(r) - \nu_{-1}(r) \rangle =  \nu_1(r)
$$

For  $1\leq i\leq \gamma_1(r)-1$, 
$x_i(n)$ and $\nu_i(r)$ satisfy the same recursive relation
$$x_{i+1}(n)=\langle 2x_i(n)-x_{i-1}(n)\rangle, \quad
\nu_{i+1}(r)=\langle 2\nu_i(r)-\nu_{i-1}(r)\rangle.$$

Thus we have
$$x_i(n)=\nu_i(r)\,\,\text{for} \,\, 0\leq i\leq \gamma_1(r).$$

Now assume that the proposition holds true for $j<j_0.$
From Proposition 3.4, we find that if $a_{2j_0-1}(n)\geq q$ then
\begin{equation}\label{equa1}
x_{S_{j_0-1}(n)+q+i}(n)=x_{S_{j_0-1}(n)+i}(n)\,\,\text{for}\,\,
0\leq i \leq a_{2j_0-1}(n)-q.
\end{equation}
Since $a_{2j_0-1}(n)-\gamma_{2j_0-1}(r)\in q\FZ$, we obtain
\begin{equation*}
\begin{split}
&x_{S_{j_0}(n)-1}(n)=x_{S_{j_0-1}(n)+a_{2j_0-1}(n)-1}(n)=x_{S_{j_0-1}(n)+\gamma_{2j_0-1}(r)-1}(n)\\
&=\nu_{\Gamma{j_0-1}(r)+\gamma_{2j_0-1}(r)-1}(r)=\nu_{\Gamma_{j_0}(r)-1}(r).
\end{split}
\end{equation*}
and
\begin{equation*}\begin{split}
&x_{S_{j_0}(n)}(n)=x_{S_{j_0-1}(n)+a_{2j_0-1}(n)}(n)\\
&=x_{S_{j_0-1}(n)+\gamma_{2j_0-1}(r)}(n)
=\nu_{\Gamma{j_0-1}(r)+\gamma_{2j_0-1}(r)}(r)=\nu_{\Gamma_{j_0}(r)}(r).
\end{split}\end{equation*}
Moreover from (15), we find that

\begin{equation}
\begin{split}
&x_{S_{j_0}(n)+1}(n)=\langle(a_{2j_0}(n)+2)x_{S_{j_0}(n)}(n)-x_{S_{j_0}(n)-1}(n)\rangle\\
&=\langle(\gamma_{2j_0}(r)+2)\nu_{\Gamma{j_0}(r)}(r)-\nu_{\Gamma_{j_0}(r)-1}(r)\rangle=\nu_{\Gamma_{j_0}(r)+1}(r).
\end{split}
\end{equation}

Since for $S_{j_0}(n)+1\leq i\leq S_{j_0+1}(n)-1$
$$x_{i+1}(n)=\langle 2x_i(n)-x_{i-1}(n)\rangle$$ and
for $\Gamma_{j_0}(r)+1\leq i\leq \Gamma_{j_0}(r)+\gamma_{2j_0+1}(r)-1=\Gamma_{j_0+1}(r)-1,$
$$\nu_{i+1}(r)=\langle 2\nu_i(r)-\nu_{i-1}(r)\rangle,$$
we have
$$x_{S_{j_0}(n)+i}(n)=\nu_{\Gamma_{j_0}(r)+i}(r)\,\,\text{for}\,\,0\leq i\leq \gamma_{2j_0+1}(r).
$$

\end{proof}

\subsection{Summations}
In this section we express 
$\sum_{i=1}^{m(n)}-B_1(x_{C+D\delta(n)}^{i})B_1(x_{C+D\delta(n)}^{i-1})+\frac{b_i(n)}2B_2(x_{C+D\delta(n)}^{i})$
using $\{\nu_{CD}^i(r)\}$. 

\begin{lem}Let $d_{l}(r):=\langle \nu_{\Gamma_{l}(r)+1}(r)-\nu_{\Gamma_{l}(r)}(r)\rangle$ and $[x]_1:=x-\langle x\rangle$.  Then for $1\leq \gamma\leq q$ and $n$ such that $\gamma\leq a_{2l+1}(n)$ and $n=qk+r$, we have
\begin{equation*}
\sum_{i=S_{l}(n)+1}^{S_{l}(n)+\gamma}(x_i(n)-x_{i-1}(n))^2
=\gamma d_{l}(r)^2
+(1-2d_{l}(r))[\nu_{\Gamma_{l}(r)}(r)+d_{l}(r)\gamma]_1
\end{equation*}
\end{lem}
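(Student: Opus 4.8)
The plan is to reduce the sum to the single block $S_l(n)\le i\le S_{l+1}(n)$ and exploit the arithmetic-progression structure of $\{x_i(n)\}$. Since $S_{l+1}(n)-S_l(n)=a_{2l+1}(n)$ and we assume $\gamma\le a_{2l+1}(n)$, the entire range of summation $S_l(n)+1\le i\le S_l(n)+\gamma$ stays inside this one block, so the common difference is constant and equals $\langle x_{S_l(n)+1}(n)-x_{S_l(n)}(n)\rangle$. By Proposition \ref{xi} the two anchor values are $x_{S_l(n)}(n)=\nu_{\Gamma_l(r)}(r)$ and $x_{S_l(n)+1}(n)=\nu_{\Gamma_l(r)+1}(r)$, so this common difference is exactly $d_l(r)$. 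Combining the arithmetic-progression property mod $\FZ$ with the bound $0<x_i(n)\le 1$ from Lemma \ref{lem3.3}, I would record that for $0\le t\le\gamma$
$$x_{S_l(n)+t}(n)=\langle \nu_{\Gamma_l(r)}(r)+t\,d_l(r)\rangle,$$
since $\langle\cdot\rangle$ produces the unique representative in $(0,1]$ of the class $\nu_{\Gamma_l(r)}(r)+t\,d_l(r)\pmod{\FZ}$.

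Next I would compute a single step. Abbreviating $a:=\nu_{\Gamma_l(r)}(r)$ and $d:=d_l(r)$, and using the defining identity $\langle x\rangle=x-[x]_1$, for $i=S_l(n)+t$ one finds
$$x_i(n)-x_{i-1}(n)=(a+td-[a+td]_1)-(a+(t-1)d-[a+(t-1)d]_1)=d-\Delta_t,$$
where $\Delta_t:=[a+td]_1-[a+(t-1)d]_1$. Because $[x]_1=\lceil x\rceil-1$ and $0<d\le 1$, the quantity $\Delta_t$ is the increment of a ceiling over a step of length at most $1$, hence $\Delta_t\in\{0,1\}$ (the boundary case $d=1$ giving $\Delta_t=1$). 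This two-valuedness of each difference is the key structural fact.

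Finally, squaring and using $\Delta_t^2=\Delta_t$ gives $(d-\Delta_t)^2=d^2+(1-2d)\Delta_t$, so
$$\sum_{i=S_l(n)+1}^{S_l(n)+\gamma}(x_i(n)-x_{i-1}(n))^2=\gamma d^2+(1-2d)\sum_{t=1}^{\gamma}\Delta_t.$$
The inner sum telescopes to $[a+\gamma d]_1-[a]_1$, and since $a=\nu_{\Gamma_l(r)}(r)\in(0,1]$ forces $[a]_1=\lceil a\rceil-1=0$, it equals $[\nu_{\Gamma_l(r)}(r)+d_l(r)\gamma]_1$. Substituting back reproduces the claimed identity. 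I expect the only delicate point to be the bookkeeping between the two normalizations $\langle\cdot\rangle$ and $[\cdot]_1$: one must keep $x_i(n)$ represented in $(0,1]$ rather than $[0,1)$, confirm the single-step ceiling increment is two-valued including the boundary $d=1$, and verify $[a]_1=0$ for the anchor. Once these are in place, the identity $\Delta_t^2=\Delta_t$ and the telescoping make the computation routine.
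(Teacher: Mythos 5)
Your proof is correct and follows essentially the same route as the paper: both decompose each difference $x_i(n)-x_{i-1}(n)$ into the constant $d_l(r)$ plus a two-valued correction (your $-\Delta_t$ is exactly the paper's $\psi_i(n)$), square, use idempotence of the correction, and identify the accumulated corrections with $[\nu_{\Gamma_l(r)}(r)+d_l(r)\gamma]_1$. Your telescoping of $\sum_t \Delta_t$ via $[x]_1=\lceil x\rceil-1$ is a slightly more explicit justification of that last counting step, which the paper simply asserts.
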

\begin{proof}
Since $0<x_i(n)\leq1$, we have 
$$-1<x_i(n)-x_{i-1}(n)<1.$$
Thus $$x_i(n)-x_{i-1}(n)=\langle x_i(n)-x_{i-1}(n)\rangle +\psi_i(n),$$
where $$\psi_i(n)=
\begin{cases}-1, \,\,x_i(n)\leq x_{i-1}(n) \\
0,\,\,\,\,\,x_i(n)> x_{i-1}(n).
\end{cases}$$

As $$\langle x_{i+1}(n)-x_i(n)\rangle =\langle\langle 2x_i(n)-x_{i-1}(n)\rangle -x_i(n)\rangle =\langle x_i(n)-x_{i-1}(n)\rangle $$ for $S_l(n)+1\leq i\leq S_{l+1}(n)-1$, we have
$$\langle x_i(n)-x_{i-1}(n)\rangle =\langle x_{S_l(n)+1}(n)-x_{S_l(n)}(n)\rangle =\langle \nu_{\Gamma_l(r)+1}(r)-\nu_{\Gamma_l(r)}(r)\rangle =d_l(r).$$

Hence we have
$$x_i(n)-x_{i-1}(n)=d_l(r)+\psi_i(n).$$

Thus we obtain 
$$\sum_{i=S_{l}(n)+1}^{S_{l}(n)+\gamma}(x_i(n)-x_{i-1}(n))^2=\gamma {d_l(r)}^2+(1-2d_l(r))\sum_{i=S_{l}(n)+1}^{S_{l}(n)+\gamma} \psi_i(n)^2.$$
Note that
$\sum_{i=S_{l}(n)+1}^{S_{l}(n)+\gamma} \psi_i(n)^2$ equals
the number of $i$'s satisfying $x_i(n)\leq x_{i-1}(n)$ for $ S_l(n)+1\leq i\leq S_{l}(n)+\gamma.$

Therefore
$$\sum_{i=S_{l}(n)+1}^{S_{l}(n)+\gamma} \psi_i(n)^2=[x_{S_{l}(n)}(n)+d_l(r) \gamma]_1=[\nu_{\Gamma_{l}(r)}(r)+d_l(r) \gamma]_1.$$
\end{proof}

For simplicity, we let $$F(x,y):=-B_1(x)B_1(y)+B_2(x)=(x-\frac{1}{2})(\frac{1}{2}-y)+x^2-x+\frac{1}{6}.$$

\begin{lem}If $l \geq 0$ and $a_{2l+1}(n)\geq q,$
\begin{equation*}
\sum_{i=S_{l}(n)+1}^{S_{l}(n)+q}F(x_i(n),x_{i-1}(n))
=\frac{1}{12}\bigg[6\bigg(qd_{l}(r)^2
+(1-2d_{l}(r))[\nu_{\Gamma_{l}(r)}(r)+d_{l}(r)q]_1\bigg)-q \bigg]
\end{equation*}
And if $1\leq\gamma\leq q-1$ and 
$a_{2l+1}(n)\geq \gamma$,
\begin{equation*}
\begin{split}
&\sum_{i=S_{l}(n)+1}^{S_{l}(n)+\gamma}F(x_i(n),x_{i-1}(n)) \\
&=\frac{1}{12}\bigg[6\bigg(\gamma d_{l}(r)^2
+(1-2d_{l}(r))[\nu_{\Gamma_{l}(r)}(r)+d_{l}(r)\gamma]_1+B_2(x_{S_{l}(n)+\gamma}(n))-B_2(x_{S_{l}(n)}(n))\bigg)-\gamma \bigg]
\end{split}
\end{equation*}
where $B_2(x)$ is the second Bernoulli polynomial.

\end{lem}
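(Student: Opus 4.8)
The plan is to reduce the sum of $F$-values to the sum of squares $\sum (x_i(n)-x_{i-1}(n))^2$ already evaluated in the preceding Lemma, up to a telescoping correction. First I would record the pointwise algebraic identity
\[
F(x,y) = \tfrac12 (x-y)^2 + \tfrac12\big(B_2(x) - B_2(y)\big) - \tfrac1{12},
\]
which follows by a direct expansion from $B_1(t)=t-\tfrac12$, $B_2(t)=t^2-t+\tfrac16$ and the definition $F(x,y)=-B_1(x)B_1(y)+B_2(x)$. Indeed, computing $F(x,y)-\tfrac12(x-y)^2 = \tfrac12(x^2-x)-\tfrac12(y^2-y)-\tfrac1{12}$ and substituting $t^2-t = B_2(t)-\tfrac16$ gives the claim.

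Next I would apply this identity with $x=x_i(n)$, $y=x_{i-1}(n)$ and sum over $S_l(n)+1\le i\le S_l(n)+\gamma$. The Bernoulli term telescopes, yielding
\[
\sum_{i=S_l(n)+1}^{S_l(n)+\gamma} F(x_i(n),x_{i-1}(n)) = \tfrac12 \sum_{i=S_l(n)+1}^{S_l(n)+\gamma}(x_i(n)-x_{i-1}(n))^2 + \tfrac12\big(B_2(x_{S_l(n)+\gamma}(n)) - B_2(x_{S_l(n)}(n))\big) - \tfrac{\gamma}{12}.
\]
Substituting the evaluation $\sum (x_i(n)-x_{i-1}(n))^2 = \gamma d_l(r)^2 + (1-2d_l(r))[\nu_{\Gamma_l(r)}(r)+d_l(r)\gamma]_1$ from the preceding Lemma and factoring out $\tfrac1{12}$ then produces the second displayed formula; here the hypothesis $a_{2l+1}(n)\ge\gamma$ is precisely what licenses the application of that Lemma on the whole range $S_l(n)+1\le i\le S_l(n)+\gamma$.

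For the first formula I would specialize to $\gamma=q$. Under the standing hypothesis $a_{2l+1}(n)\ge q$, the periodicity result for $\{x_i(n)\}$ on the block $S_l(n)\le i\le S_{l+1}(n)$ gives $x_{S_l(n)+q}(n)=x_{S_l(n)}(n)$, so the telescoping term $B_2(x_{S_l(n)+q}(n))-B_2(x_{S_l(n)}(n))$ vanishes and the expression collapses to the stated closed form.

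I do not expect a serious obstacle: the argument is an exact algebraic identity followed by telescoping and a single citation. The only points requiring care are the bookkeeping with the index shifts and checking that the range constraints on $\gamma$ relative to $a_{2l+1}(n)$ match the hypotheses of the preceding Lemma, together with confirming that the periodicity hypothesis $a_{2l+1}(n)\ge q$ is genuinely in force in the $\gamma=q$ case.
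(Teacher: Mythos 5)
Your proposal is correct and follows essentially the same route as the paper: the identity $F(x,y)=\tfrac12(x-y)^2-\tfrac1{12}+\tfrac12(B_2(x)-B_2(y))$, telescoping of the Bernoulli terms, substitution of the preceding lemma's evaluation of $\sum(x_i(n)-x_{i-1}(n))^2$, and the vanishing of the telescoped term at $\gamma=q$ via the periodicity $x_{S_l(n)+q}(n)=x_{S_l(n)}(n)$. No gaps.
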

\begin{proof}
We note that
$$F(x,y)=\frac{1}{2}(x-y)^2-\frac{1}{12}+\frac{1}{2}(B_2(x)-B_2(y)).$$
Thus we have
\begin{equation*}
\begin{split}
&\sum_{i=S_{l}(n)+1}^{S_{l}(n)+\gamma}F(x_i(n),x_{i-1}(n))\\
&=
\sum_{i=S_{l}(n)+1}^{S_{l}(n)+\gamma}\Big{[}\frac{1}{2}(x_{i}(n)-x_{i-1}(n))^2-\frac{1}{12}+\frac{1}{2}(B_2(x_i(n))-B_2(x_{i-1}(n)))\Big{]}.
\end{split}
\end{equation*}

We note that for $1\leq\gamma\leq q-1,$
$$\sum_{i=S_{l}(n)+1}^{S_{l}(n)+\gamma}B_2(x_i(n))-B_2(x_{i-1}(n))=B_2(x_{S_{l}(n)+\gamma}(n))-B_2(x_{S_{l}(n)}(n)).$$
and for $\gamma=q$ from the periodicity of $x_i(n)$ we have
$$\sum_{i=S_{l}(n)+1}^{S_{l}(n)+q}B_2(x_i(n))-B_2(x_{i-1}(n))=0.$$
\end{proof}



\begin{prop}\label{AB}
Suppose $\delta(n)-1=[[a_0(n),a_2(n),\cdots a_{s-1}(n)]]$, $a_i(n)=\alpha_i n+\beta_i$
for $\alpha_i ,\beta_i \in \FZ$ and  $a_i(r)=q\tau_i(r)+\gamma_i(r)$ for
$\gamma_i(r)=\langle a_i(r)\rangle_q$.
Let $d^{l}_{CD}(r):=\langle\nu^{\Gamma_{l}(r)+1}_{CD}(r)-\nu^{\Gamma_{l}(r)}_{CD}(r)\rangle$.
Then, for $n=qk+r$, we have
\begin{equation*}
\sum_{i=1}^{m(n)}-B_1(x^i_{C+D\delta(n)})B_1(y^i_{C+D\delta(n)})+\frac{b_i(n)}{2}B_2(x^i_{C+D\delta(n)})=\frac{1}{12}(A_{CD}(r)+kB_{CD}(r))
\end{equation*}
where
\begin{equation*}
\begin{split}
&A_{CD}(r):=\sum_{l=1}^{s\mu(s)} -12 B_1(\nu^{\Gamma_l(r)}_{CD}(r))B_1(\nu^{\Gamma_l(r)-1}_{CD}(r))+6(a_{2l}(r)+2)B_2(\nu^{\Gamma_l(r)}_{CD}(r))\\
&+\sum_{l=0}^{s\mu(s)-1} \Big{[}6 \Big{(}(\gamma_{2l+1}(r)-1){d^{l}_{CD}(r)}^2+(1-2d^{l}_{CD}(r))[\nu^{\Gamma_{l}(r)}_{CD}(r)+d^{l}_{CD}(r)(\gamma_{2l+1}(r)-1)]_1\\
&+B_2(\nu^{\Gamma_{l+1}(r)-1}_{CD}(r))-B_2(\nu^{\Gamma_{l}(r)}_{CD}(r))\Big{)}-\gamma_{2l+1}(r)+1 \\
&+\tau_{2l+1}(r) \Big{(}6(q{d^{l}_{CD}(r)}^2+(1-2d^{l}_{CD}(r))[\nu^{\Gamma_{l}(r)}_{CD}(r)+d^{l}_{CD}(r)q]_1)-q \Big{)}\Big{]}
\end{split}
\end{equation*}
and
\begin{equation}
\begin{split}
&B_{CD}(r):=\sum_{l=1}^{s\mu(s)}6q\alpha_{2l}B_2(\nu^{\Gamma_l(r)}_{CD}(r))\\
&+\sum_{l=0}^{s\mu(s)-1}\alpha_{2l+1}\Big{(}6(q{d^{l}_{CD}(r)}^2
+(1-2d^{l}_{CD}(r))[\nu^{\Gamma_{l}(r)}_{CD}+d^{l}_{CD}(r)q]_1)-q\Big{)}
\end{split}
\end{equation}
\end{prop}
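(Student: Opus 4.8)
The plan is to assemble the closed form directly from Proposition \ref{hecke0} and the two lemmas of this subsection, carefully tracking which pieces are constant in $k$ and which are proportional to $k$. First I would eliminate the $y$-variable. By Yamamoto's recursion (\ref{yama}) one has $y^i_{C+D\delta(n)} = 1 - x^{i-1}_{C+D\delta(n)}$, and since $B_1(1-t) = -B_1(t)$, the summand is expressible purely in the $x$-variables as $-B_1(x^i_{C+D\delta(n)})B_1(x^{i-1}_{C+D\delta(n)}) + \frac{b_i(n)}{2}B_2(x^i_{C+D\delta(n)})$, which is exactly what the summation lemmas are built to evaluate. Next I would invoke the splitting already recorded in the plan of the proof: because $b_i(n) = a_{2l}(n)+2$ when $i = S_l(n)$ and $b_i(n) = 2$ otherwise, the sum over $1 \le i \le m(n)$ breaks into a \emph{vertex} sum over $l$ (the terms at $i = S_l(n)$, for $1\le l\le s\mu(s)$) plus a sum of $F(x_i(n), x_{i-1}(n))$ over each gap $S_l(n)+1 \le i \le S_{l+1}(n)-1$, where $F(x,y) := -B_1(x)B_1(y)+B_2(x)$.

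For each gap I would use the period-$q$ periodicity of $\{x_i(n)\}$ established above (valid once $a_{2l+1}(n)\ge q$) to split its length $a_{2l+1}(n)-1 = \kappa_{2l+1}(n)q + (\gamma_{2l+1}(r)-1)$ into $\kappa_{2l+1}(n)$ complete periods plus a remainder of $\gamma_{2l+1}(r)-1$ terms. By periodicity every full block of $F$-values contributes the same amount, so the gap sum equals the partial sum over the remainder plus $\kappa_{2l+1}(n)$ times the full-period sum; both are evaluated by the preceding lemma in terms of $d^{l}_{CD}(r)$, the bracket $[\,\cdot\,]_1$, and boundary values of $B_2$. Here Proposition \ref{xi} performs the essential translation: it identifies $x_{S_l(n)}(n) = \nu^{\Gamma_l(r)}_{CD}(r)$, $x_{S_l(n)-1}(n) = \nu^{\Gamma_l(r)-1}_{CD}(r)$, and $x_{S_l(n)+\gamma_{2l+1}(r)-1}(n) = \nu^{\Gamma_{l+1}(r)-1}_{CD}(r)$, so that every quantity appearing becomes a function of $r$ alone, while $d^{l}_{CD}(r) = \langle \nu^{\Gamma_l(r)+1}_{CD}(r) - \nu^{\Gamma_l(r)}_{CD}(r)\rangle$ is manifestly $k$-independent.

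For the vertex sum I would again substitute $x_{S_l(n)}(n) = \nu^{\Gamma_l(r)}_{CD}(r)$ and $x_{S_l(n)-1}(n) = \nu^{\Gamma_l(r)-1}_{CD}(r)$, and expand the coefficient as $\frac{a_{2l}(n)+2}{2} = \frac{q\alpha_{2l}k + a_{2l}(r)+2}{2}$ using $a_{2l}(n) = \alpha_{2l}n + \beta_{2l}$ with $n = qk+r$. The last step is to collect the $k$-linear parts: the only sources of $k$-dependence are the period count $\kappa_{2l+1}(n) = k\alpha_{2l+1} + \tau_{2l+1}(r)$ from (\ref{kappa2}), multiplying the full-period sums, and the term $q\alpha_{2l}k$ inside the vertex coefficient. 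Separating each contribution into its $k^0$ and $k^1$ parts and multiplying through by $12$ produces exactly $A_{CD}(r)$ and $B_{CD}(r)$ as stated.

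The main obstacle is organizational rather than analytic. One must verify that the remainder block lands precisely on the index range $[\Gamma_l(r)+1,\ \Gamma_{l+1}(r)-1]$ of the $\nu$-sequence, so that the telescoping second-Bernoulli contributions pair up as $B_2(\nu^{\Gamma_{l+1}(r)-1}_{CD}(r)) - B_2(\nu^{\Gamma_l(r)}_{CD}(r))$; that the count $\kappa_{2l+1}(n)$ separates cleanly into its $k$-part $\alpha_{2l+1}$ and its $r$-part $\tau_{2l+1}(r)$; and that the periodicity hypothesis $a_{2l+1}(n)\ge q$ holds for all large $k$, which it does whenever the relevant $\alpha_{2l+1}$ are positive. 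No single estimate is difficult; the care lies entirely in matching indices and signs across the three substitutions and in confirming that, apart from the two explicit $k$-linear terms, every factor depends only on $r$.
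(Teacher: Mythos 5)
Your proposal is correct and follows essentially the same route as the paper: the same splitting of the sum into vertex terms at $i=S_l(n)$ and gap sums of $F$, the same use of periodicity to write each gap as $\kappa_{2l+1}(n)$ full periods plus a remainder of $\gamma_{2l+1}(r)-1$ terms evaluated by the preceding lemma, the same identification of boundary values via Proposition \ref{xi}, and the same extraction of the $k$-linear parts from $\kappa_{2l+1}(n)=k\alpha_{2l+1}+\tau_{2l+1}(r)$ and the vertex coefficient. The only cosmetic difference is that you make explicit the elimination of $y^i$ via $B_1(1-t)=-B_1(t)$, which the paper absorbs into its earlier display (18).
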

\begin{proof}
From equation (18), we have
\begin{equation*}
\begin{split}
&\sum_{i=1}^{m(n)}B_1(x^i_{C+D\delta(n)})B_1(y^i_{C+D\delta(n)})+\frac{b_i(n)}{2}B_2(x^i_{C+D\delta(n)})\\
&=\sum_{l=1}^{s\mu(s)}[-B_1(x^{S_l(n)}_{C+D\delta(n)})B_1(x^{S_l(n)-1}_{C+D\delta(n)})+\frac{\alpha_{2l}qk+\tau_{2l}(r)q+\gamma_{2l}(r)+2}{2}B_2(x^{S_l(n)}_{C+D\delta(n)})]\\
&+\sum_{l=0}^{s\mu(s)-1}\sum_{i=S_l(n)+1}^{\substack{S_l(n)+q\alpha_{2l+1}k\\+q\tau_{2l+1}(r)+\gamma_{2l+1}(r)-1}}F(x^i_{C+D\delta(n)},x^{i-1}_{C+D\delta(n)})
\end{split}
\end{equation*}
From lemma 3.7, we have
\begin{equation*}
\begin{split}
&12\sum_{i=S_l(n)+1}^{\substack{S_l(n)+q\alpha_{2l+1}k\\+q\tau_{2l+1}(r)\gamma_{2l+1}(r)-1}}F(x^i_{C+D\delta(n)},x^{i-1}_{C+D\delta(n)})=12\sum_{i=S_l(n)+1}^{S_l(n)+\gamma_{2l+1}(r)-1}F(x^i_{C+D\delta(n)},x^{i-1}_{C+D\delta(n)})\\
&+12(\alpha_{2l+1}k+\tau_{2l+1}(r))\sum_{i=S_l(n)+1}^{S_l(n)+q}F(x^i_{C+D\delta(n)},x^{i-1}_{C+D\delta(n)})=\\
&6 \Big{(}(\gamma_{2l+1}(r)-1){d^{l}_{CD}(r)}^2+(1-2d^{l}_{CD}(r))[\nu^{\Gamma_{l}(r)}_{CD}(r)+d^{l}_{CD}(r)(\gamma_{2l+1}(r)-1)]_1 \\
&+B_2(x^{S_l(n)+\gamma_{2l+1}-1}_{C+D\delta(n)})-B_2(x^{S_{l}(n)}_{C+D\delta(n)})\Big{)}-(\gamma_{2l+1}(r)-1)\\
&+(\alpha_{2l+1}k+\tau_{2l+1}(r))\Big{(}6(q{d_{CD}^l(r)}^2+(1-2d_{CD}^l(r))[\nu_{CD}^{\Gamma_l(r)}(r)+d_{CD}^l(r)q]_1)-q\Big{)}
\end{split}
\end{equation*}
Since $x^{S_{l}(n)}_{C+D\delta(n)}=\nu^{\Gamma_l(r)}_{CD}(r)$, $x^{S_{l}(n)-1}_{C+D\delta(n)}=\nu^{\Gamma_l(r)-1}_{CD}(r)$ and $x^{S_{l}(n)+\gamma_{2l+1}(r)-1}_{C+D\delta(n)}=\nu^{\Gamma_{l+1}(r)-1}_{CD}$, we complete the proof.
\end{proof}

\subsection{End of the proof }

\begin{proof}

Since $\nu_{CD}^{\Gamma_l(r)}(r), \nu_{CD}^{\Gamma_l(r)-1}(r)$ and $d_{CD}^l(r)$ are in  $\frac{1}{q}\FZ$, we find that
$$q^2 A_{CD}(r), q^2B_{CD}(r)\in \FZ.$$
Moreover, we have
$$L_{K_n}(0,\chi_n,\fb_n)=\frac{1}{12q^2}\sum_{C,D}\chi_n(C+D\delta(n))(q^2A_{CD}(r)+kq^2B_{CD}(r)).$$
Since $\chi$ is a Dirichlet character of modulus $q$, if $n=qk+r$,
we can write
$$\chi_n(\fb_n(C+D\delta(n)))= F_{CD}(r),$$
for a function  $F_{CD}$. 
Note if $K_r$ is defined, $\chi_n(\fb_n(C+D\delta(n)))
=\chi_r(\fb_r(C+D\delta(r)))= F_{CD}(r)$. 
We warn the reader that the above expression does not make sense if $K_r$ and $\delta(r)$ 
are undefined.

If we set
$$A_{\chi}(r):=\sum_{C,D}F_{CD}(r) 
q^2A_{CD}(r)$$
and
$$B_{\chi}(r):=\sum_{C,D}F_{CD}(r) 
q^2B_{CD}(r),$$
we obtain  the proof.

\end{proof}

\section{Bir\'o's method}
Let $K_n$ be a family of real quadratic fields such that special value at $s=0$ of the Hecke $L$-function has linearity.
In \cite{Biro1} and  \cite{Biro2}, Bir\'o developed a way to find the residue of $n$ with $h(K_n)=1$ by certain
primes using the linearity.
In this section, we sketch Bir\'o's method.

Let $K_n=\FQ(\sqrt{d})$ for a square free integer  $d=f(n)$ and $D_n$ be the discriminant
$K_n$.
For an odd Dirichlet character $\chi:\FZ/q\FZ\rightarrow \FC^*$, 
$\chi_n$ denotes the ray class character defined as $\chi_n=\chi\circ N_{K_n}:I_n(q)/P_n(q)^+\rightarrow \FC^*$.  
$\chi_D=(\frac{D}{\cdot})$ denotes the Kronecker character.
Then  
the special value of the Hecke $L$-function at $s=0$ has a factorization
\begin{equation}
\begin{split}
L_{K_n}(0,\chi_n)=L(0,\chi)L(0,\chi\chi_{D_n})\\
=(\frac{1}{q}\sum_{a=1}^q a\chi(a))(\frac{1}{qD_n}\sum_{b=1}^{qD_n}
b\chi(b)\chi_{D_n}(b)),
\end{split}
\end{equation}

Let $\fb_n=O_{K_n}.$
Suppose that $L_{K_n}(0,\chi_n,\fb_n)$ is linear in the form: 
$$L_{K_n}(0,\chi_n,\fb_n)=\frac{1}{12q^2}(A_{\chi}(r)+kB_{\chi}(r)).$$
for $A_{\chi}(r),B_{\chi}(r)\in \FZ[\chi(1),\chi(2)\cdots\chi(q)]$.
Let $\epsilon_n$ be the fundamental unit of $K_n$.
Form Proposition 2.2 in \cite{Lee5}, we find that
$L_{K_n}(0,\chi_n,\fb_n)=L_{K_n}(0,\chi_n,(\epsilon_n)\fb_n).$
Thus if the class number of $K_n$ is one then
we have for $n=qk+r$
$$L_{K_n}(0,\chi_n)=\frac{c}{12q^2}(A_{\chi}(r)+kB_{\chi}(r))$$
where $c$ is the number of narrow ideal classes. 

Then we have
$$B_{\chi}(r)k+A_{\chi}(r)=\frac{12q}{c} \cdot(\sum_{a=1}^q a\chi(a))\cdot\Big{(}\frac{1}{qD_n}\sum_{b=1}^{qD_n}
b\chi(b)\chi_{D_n}(b)\Big{)}.$$

Let $L_{\chi}$ be the cyclotomic 
field 
generated by the values of $\chi$. 
Since $\frac{1}{qD_n}\sum_{b=1}^{qD_n}
b\chi(b)\chi_{D_n}(b)$ is integral 
in $L_{\chi}$, for a prime ideal $I$ of $L_{\chi}$ dividing $(\sum_{a=1}^q a\chi(a))$, we have
$$B_{\chi}(r)k+A_{\chi}(r) \equiv 0 
\pmod{I}.
$$
And if $I \nmid B_{\chi}(r)$ then
$$k\equiv-\frac{A_{\chi}(r)}{B_{\chi}(r)}
\pmod{I}.
$$
Since $n=qk+r$, we have
$$
n \equiv -q\frac{A_{\chi}(r)}{B_{\chi}(r)} +r \quad\pmod{I}.
$$
Moerover if $O_{L_{\chi}}/I=\FZ/p\FZ$,
the residue of $n$ modulo $p$ is expressed only in $A_\chi(r), B_\chi(r)$ and $r$ as above.

\bigskip
Below we arrange  all the necessary conditions of $q$ and $p$.\\

\fbox{\begin{minipage}{\textwidth}
\underline{Condition(*)}\\
1. $q$: odd integer \\
2. $p$: odd prime \\
3. $\chi$:  character with conductor $q$ \\
4. $I$: prime ideal in $L_{\chi}$  lying over $p$ \\
$\text{\,\,\,\,\,\,\,}I | (\sum_{a=1}^q a\chi(a))$ and  $O_{L_{\chi}}/I=\FZ/p\FZ$
\end{minipage}}\\

Note that the condition is independent of the family $\{K_n\}$, once the linearity
holds.

Let $S$ be the set of $(q,p)$  satisfying Condition(*).
$$
S = \bigcup_{q:\text{odd integer}} S_q
$$ where $S_q := \{(q,p)\in S\}$.




Finally we remark that for $(q,p)\in S$  we obtain the residue of $n=qk+r$ modulo $p$ for which the class number of $K_n$ is one.

The above method has been applied to find an upper bound of the discriminant of real quadratic fields with class number one in some families of Richaud-degert type where the linearity criterion is satisfied(cf. \cite{Biro2}, \cite{Biro1}, \cite{Lee1}, \cite{Lee3}). Together with properly developed class number one criteria for each cases, the class number problems could be solved.

It is easily checked that in fact the criterion is fulfilled by general families of Richaud-Degert type. Furthermore, there are still abundant examples such families of real quadratic fields satisfying the linearity criterion(cf. \cite{Mc}). For these, we have controlled behavior of the 
special values of Hecke's L-function at $s=0$ and Biro's method is directly applied for each cases. We expect many other meaningful problems for family of real quadratic fields than class number problem in arithmetic can be studied in this line.



\section{A generalization}

We conclude this section with a possible generalization of the
linearity of the special value of the Hecke $L$-function. This
generalization will be dealt in a separate paper \cite{Jun-Lee}.

As in the criterion for linearity, we set $K_n = \FQ(\sqrt{f(n)})$
and $\fb_n$ is an integral ideal of $K_n$. We assume
$$
\fb^{-1}_n = [1, \delta(n)],
$$
for
$$\delta(n)-1 = [a_1(n),a_2(n), \ldots, a_s(n)]$$
with $a_i(x) \in \FZ[x]$.

For a given conductor $q$, write $n=qk +r$ for $r=0,1,2,\ldots,
q-1$.  Suppose $N = \max_i \{\deg (a_i(x))\}$, then we obtain that
the special value of the partial $\zeta$-function of the ray class
of $\fb_n$ mod $q$ at $s=0$ is written as
$$
\zeta_{K_n,q}(0, (C+D\delta(n))\fb_n) = \frac{1}{12q^2}\big(A_0(r) + A_1(r) k +\cdots+A_N(r) k^N\big)
$$
for some rational integers $A_i$ depending only on $r$.

We have no application of this property in arithmetic. It will be
very interesting if one applies this in similar fashion as
Bir\'o's method as presented here.

\end{document}